\newcommand{\C}{\mathbb{C}}
\newcommand{\QQ}{\mathbb{Q}}
\newcommand{\PP}{\mathbb{P}}
\newcommand{\OO}{\mathcal O}
\newcommand{\XX}{\mathcal X}
\newcommand{\YY}{\mathcal Y}
\newcommand{\VV}{\mathcal V}
\newcommand{\WW}{\mathcal W}
\newcommand{\EE}{\mathcal E}
\newcommand{\MM}{\mathcal M}
\newcommand{\BB}{\mathfrak B}
\newcommand{\codim}{\hbox{codim}}
\newcommand{\gr}{\hbox{Gr}}
\newcommand{\wt}{\widetilde}
\newcommand{\rom}{\romannumeral}
\DeclareMathOperator{\aut}{Aut}
\DeclareMathOperator{\ide}{id}
\DeclareMathOperator{\ima}{Im}
\newtheorem{theorem}{Theorem}[section]
\newtheorem{lemma}[theorem]{Lemma}
\newtheorem{corollary}[theorem]{Corollary}
\newtheorem{proposition}[theorem]{Proposition}
\newtheorem{conjecture}[theorem]{Conjecture}
\newtheorem{remark}[theorem]{Remark}
\newtheorem{definition}[theorem]{Definition}
\newtheorem{convention}{Conventions}
\newtheorem{nonumbering}{Theorem}
\newtheorem{nonumberingc}{Corollary}
\newtheorem{nonumberingt}{Acknowledgements}
\begin{document}
\author[Robert Laterveer]
{Robert Laterveer}

\address{Institut de Recherche Math\'ematique Avanc\'ee,
CNRS -- Universit\'e 
de Strasbourg,\
7 Rue Ren\'e Des\-car\-tes, 67084 Strasbourg CEDEX,
FRANCE.}
\email{robert.laterveer@math.unistra.fr}

\title[On Chow groups of HK fourfolds with non--symplectic involution]{On the Chow groups of some hyperk\"ahler fourfolds with a non--symplectic involution}

\begin{abstract} This note concerns hyperk\"ahler fourfolds $X$ having a non--symplectic involution $\iota$. The Bloch--Beilinson conjectures predict the way $\iota$ should act on certain pieces of the Chow groups of $X$.
The main result is a verification of this prediction for Fano varieties of lines on certain cubic fourfolds. This has consequences for the Chow ring of the quotient $X/\iota$.
\end{abstract}

\keywords{Algebraic cycles, Chow groups, motives, Bloch's conjecture, Bloch--Beilinson filtration, hyperk\"ahler varieties, non--symplectic involution, multiplicative Chow--K\"unneth decomposition, splitting property, Calabi--Yau varieties.}
\subjclass[2010]{Primary 14C15, 14C25, 14C30.}

\maketitle

\section{Introduction}

For a smooth projective variety $X$ over $\C$, let $A^i(X):=CH^i(X)_{\QQ}$ denote the Chow groups of $X$ (i.e. the groups of codimension $i$ algebraic cycles on $X$ with $\QQ$--coefficients, modulo rational equivalence). As explained for instance in \cite{J2} or \cite{Vo}, the Bloch--Beilinson conjectures form a powerful and coherent heuristic guide, allowing to make concrete predictions about Chow groups. 
In this note, we focus on one particular instance of such a prediction, concerning non--symplectic involutions on hyperk\"ahler varieties.

Let $X$ be a hyperk\"ahler variety (i.e., a projective irreducible holomorphic symplectic manifold, cf. \cite{Beau0}, \cite{Beau1}), and suppose $X$ has an anti--symplectic involution 
$\iota$. 
The action of $\iota$ on the subring $H^{\ast,0}(X)$ is well--understood: we have
 \[ \begin{split} \iota^\ast=  -\ide \colon\ \ \ &H^{2i,0}(X)\ \to\ H^{2i,0}(X)\ \ \ \hbox{for}\ i\ \hbox{odd}\ ,\\
                        \iota^\ast=  \ide \colon\ \ \ &H^{2i,0}(X)\ \to\ H^{2i,0}(X)\ \ \ \hbox{for}\ i\ \hbox{even}\ .\\
                  \end{split}\]
                  
The action of $\iota$ on the Chow ring $A^\ast(X)$ is more mysterious.                      
To state the conjectural behaviour,
we will now assume the Chow ring of $X$ has a bigraded ring structure $A^\ast_{(\ast)}(X)$, where each $A^i(X)$ splits into pieces
  \[ A^i(X) =\bigoplus_j A^i_{(j)}(X)\ ,\]
and the piece $A^i_{(j)}(X)$ is isomorphic to the graded $\gr^j_F A^i(X)$ for the Bloch--Beilinson filtration that conjecturally exists for all smooth projective varieties.   
 (It is expected such a bigrading $A^\ast_{(\ast)}(-)$ exists for all hyperk\"ahler varieties \cite{Beau3}.) 
 
 Since the pieces $A^i_{(i)}(X)$ and $A^{\dim X}_i(X)$ should only depend on the subring $H^{\ast,0}(X)$, we arrive at the following conjecture:
 
 \begin{conjecture}\label{conj} Let $X$ be a hyperk\"ahler variety of dimension $2m$, and let $\iota\in\aut(X)$ be a non--symplectic involution. Then
   \[  \begin{split}  \iota^\ast= (-1)^i \ide\colon\ \ \ &A^{2i}_{(2i)}(X)\ \to\ A^{2i}(X)\ ,\\
                           \iota^\ast= (-1)^i \ide\colon\ \ \ &A^{2m}_{(2i)}(X)\ \to\ A^{2m}(X)\ .\\
                    \end{split}\]       
  \end{conjecture}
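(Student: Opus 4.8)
I do not expect Conjecture \ref{conj} to be accessible in full generality; the plan is to verify it for $X=F(Y)$, the Fano variety of lines on a cubic fourfold $Y\subset\mathbb{P}^5$ carrying a suitable linear involution --- the model case being a cubic fourfold possessing an Eckardt point $p$, with $\sigma$ the associated Eckardt involution of $Y$ and $\iota=F(\sigma)$. Here $\sigma$ is induced by an involution of $\mathbb{P}^5$ with $\det=-1$, so that $\sigma^\ast=-\mathrm{id}$ on $H^{3,1}(Y)\cong H^{2,0}(X)$, i.e. $\iota$ is non--symplectic. I would begin by invoking the self--dual multiplicative Chow--K\"unneth decomposition on $X$ constructed by Shen--Vial, which furnishes the bigrading $A^\ast_{(\ast)}(X)$ of Conjecture \ref{conj}; since its Chow--K\"unneth projectors are assembled from the Pl\"ucker polarization $g\in A^1(X)$ and the incidence correspondence $P\subset Y\times X$, all of which are $(\sigma,\iota)$--equivariant, the operator $\iota^\ast$ on $A^\ast(X)$ respects this bigrading. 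As $\dim X=4$, Conjecture \ref{conj} reduces to the three assertions
\[ \iota^\ast=-\mathrm{id}\ \text{on}\ A^2_{(2)}(X),\qquad \iota^\ast=-\mathrm{id}\ \text{on}\ A^4_{(2)}(X),\qquad \iota^\ast=\mathrm{id}\ \text{on}\ A^4_{(4)}(X), \]
the pieces $A^0_{(0)}(X)=\mathbb{Q}$ and $A^4_{(0)}(X)=\mathbb{Q}\,o_X$ being obviously $\iota$--invariant.

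Next I would peel off the two assertions concerning $A^4$. By the Shen--Vial description of the Fourier decomposition, $A^4_{(2)}(X)$ is the image of $A^2_{(2)}(X)$ under multiplication by a distinguished class $l\in A^2_{(0)}(X)$ (a universal polynomial in $g$ and the Chern classes of $X$, hence $\iota$--invariant), so that, using multiplicativity of the decomposition, $\iota^\ast=-\mathrm{id}$ on $A^4_{(2)}(X)$ follows as soon as we know it on $A^2_{(2)}(X)$. Likewise $A^4_{(4)}(X)$ is spanned by intersection products of classes in $A^2_{(2)}(X)$, whence $\iota^\ast(a\cdot b)=(\iota^\ast a)(\iota^\ast b)=(-a)(-b)=a\cdot b$ there. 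Everything thus comes down to
\[ (\star)\qquad \iota^\ast=-\mathrm{id}\colon\ A^2_{(2)}(X)\longrightarrow A^2_{(2)}(X). \]
Here the incidence correspondence $P$ (corrected by suitable polynomials in $g$) induces an isomorphism $A^2_{(2)}(X)\xrightarrow{\,\sim\,}A_1(Y)_{\mathrm{hom}}$ intertwining $\iota^\ast$ and $\sigma^\ast$, so $(\star)$ is equivalent to the statement $\sigma^\ast=-\mathrm{id}$ on the group $A_1(Y)_{\mathrm{hom}}=A^3_{\mathrm{hom}}(Y)$ of homologically trivial $1$--cycles on $Y$.

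To establish this Bloch--type statement I would exploit the geometry of the Eckardt involution: projection from $p$ presents $Y$ birationally as a $2{:}1$ cover of $\mathbb{P}^4$ with covering involution $\sigma$, and resolving this $\sigma$--equivariantly --- via blow--ups along \emph{rational} centres, essentially the point $p$, the (rational) surface parametrising the lines of $Y$ through $p$, and the corresponding rational surface in $\mathbb{P}^4$ --- one obtains a finite double cover $\pi\colon\widetilde Y\to W$ with $W$ a smooth rational fourfold. Rationality of all the centres guarantees $A_1(\widetilde Y)_{\mathrm{hom}}=A_1(Y)_{\mathrm{hom}}$ and $A_1(W)_{\mathrm{hom}}=0$; hence $\pi_\ast$ kills $A_1(\widetilde Y)_{\mathrm{hom}}$, and the projection formula $\pi^\ast\pi_\ast=\mathrm{id}+\sigma_\ast$ forces $\sigma_\ast=\sigma^\ast=-\mathrm{id}$ on $A_1(\widetilde Y)_{\mathrm{hom}}=A_1(Y)_{\mathrm{hom}}$, which is $(\star)$. (An alternative route, when $Y$ lies in a Hassett divisor with associated K3 surface $S$: transport $(\star)$ through the motivic isomorphism $\mathfrak{h}^4_{\mathrm{tr}}(Y)\cong\mathfrak{h}^2_{\mathrm{tr}}(S)(1)$ to the anti--symplectic involution $\tau$ induced by $\sigma$ on $S$, and deduce $\tau^\ast=-\mathrm{id}$ on $A^2_{\mathrm{hom}}(S)=A_0(S)_{\mathrm{hom}}$ from the same projection--formula argument applied to $S/\tau$, which, being the quotient of a K3 surface by an anti--symplectic involution, is a smooth surface with $p_g=0$ --- hence rational or Enriques --- and so has $A^2_{\mathrm{hom}}(S/\tau)_{\mathbb{Q}}=0$.)

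I expect the main obstacle to be precisely this last step, $(\star)$: one must restrict to cubic fourfolds (the ``certain'' of the title) for which the double--cover presentation is clean enough --- the branch divisor and all the blown--up centres being rational, so that no new homologically trivial $1$--cycles are created in passing to $\widetilde Y$ or $W$ --- or, in the alternative route, for which the isomorphism $\mathfrak{h}^4_{\mathrm{tr}}(Y)\cong\mathfrak{h}^2_{\mathrm{tr}}(S)(1)$ is available on the level of Chow motives and can be made $\sigma$--$\tau$--equivariant. A secondary, more routine verification is that $\iota$ preserves the Shen--Vial decomposition together with its Fourier structure, so that $\iota^\ast$ is block--diagonal for the bigrading; once that is in hand, the deduction of the full Conjecture \ref{conj} from $(\star)$ is formal.
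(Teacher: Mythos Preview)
Your reduction of the conjecture (for $X=F(Y)$) to the single statement $(\star)\colon(\iota_Y)^\ast=-\ide$ on $A^3_{\rm hom}(Y)$ is exactly the paper's strategy: this is Theorem~\ref{main2} feeding into Theorem~\ref{main}, via precisely the Shen--Vial inputs you list (the isomorphism $\cdot\,\ell\colon A^2_{(2)}(X)\to A^4_{(2)}(X)$, the surjection $A^2_{(2)}\otimes A^2_{(2)}\twoheadrightarrow A^4_{(4)}$, the $\iota$--invariance of $\ell$, the compatibility of $\iota^\ast$ with the bigrading --- the paper's Lemma~\ref{compat} --- and the identification $A^2_{(2)}(X)=({}^tP)_\ast A^3_{\rm hom}(Y)$).

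Where you genuinely diverge is in proving $(\star)$. The paper argues \emph{family--wise}: it spreads the fibrewise cohomological identity ${}^t\Gamma_{\iota_Y}+\Delta_Y=\gamma$ (with $\gamma$ completely decomposed) over the universal family $\YY\to B$, checks the hypotheses of Voisin's equivariant spread result (Proposition~\ref{voisin2}) --- this is where most of the work lies --- upgrades to a Chow--level identity for general $b\in B$, and then specialises to all $b$. Your route is a direct geometric argument on a single $Y$: blowing up the isolated fixed point $p=[1,0,\ldots,0]$ makes the $\sigma$--fixed locus on $\widetilde Y=\mathrm{Bl}_pY$ purely divisorial (the exceptional $E\cong\PP^3$ together with the smooth cubic threefold $\{X_0=0\}\cap Y$), so $W:=\widetilde Y/\sigma$ is already smooth; the induced morphism $W\to\PP^4$ is birational with exceptional locus a $\PP^1$--fibration over the rational cubic surface $S'=\{\ell=g=0\}$, whence (for $S'$ smooth) $W\cong\mathrm{Bl}_{S'}\PP^4$ and $A_1(W)_{\rm hom}=0$; the projection formula $\pi^\ast\pi_\ast=\ide+\sigma_\ast$ then gives $(\star)$. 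Equivalently and even more directly, $Y/\sigma$ is the hypersurface $\{U\ell+g=0\}\subset\PP(2,1^5)$, and a short localisation computation shows $A_1(Y/\sigma)=\QQ$, so $A^3_{\rm hom}(Y)^\sigma=A_1(Y/\sigma)_{\rm hom}=0$ for \emph{every} smooth $Y$ in the family. Thus the ``main obstacle'' you anticipated is not one here: the centres really are rational and no spurious $1$--cycles appear. What the paper's heavier spread argument buys is a template that does not depend on the quotient $Y/\langle\sigma\rangle$ having trivial Chow groups, and which therefore transports to other families of non--symplectic automorphisms (cf.\ the paper's closing remarks) where this rationality shortcut may be unavailable.
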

 
 This conjecture is studied, and proven in some particular cases, in \cite{EPW}, \cite{HKnonsymp}, \cite{BlochHK4}, \cite{ChowEPW}.
 The aim of this note is to provide some more examples where conjecture \ref{conj} is verified, by considering Fano varieties of lines on cubic fourfolds. The main result is as follows:

\begin{nonumbering}[=theorem \ref{main}] Let $Y\subset\PP^5(\C)$ be a smooth cubic fourfold defined by an equation
    \[ (X_0)^2 \ell(X_1,\ldots,X_5)+ g(X_1,\ldots,X_5)=0\ .\]
 Let $X=F(Y)$ be the Fano variety of lines in $Y$. Let $\iota\in\aut(X)$ be the anti--symplectic involution induced by
  \[ \begin{split}  \PP^5(\C)\ &\to\ \PP^5(\C)\ ,\\
                        [X_0,X_1,\ldots,X_5]\ &\mapsto\ [-X_0,X_1,\ldots,X_5]\ .\\
                        \end{split}\]
 Then
   \[  \begin{split}  \iota^\ast=-\ide\colon\ \ \ &A^i_{(2)}(X)\ \to\ A^i_{(2)}(X)\ \ \ \hbox{for}\ i=2,4\ ;\\
                            \iota^\ast=\ide\colon\ \ \ &A^4_{(j)}(X)\ \to\ A^4_{(j)}(X)\ \ \ \hbox{for}\ j=0,4\ .\\
                        \end{split}\]  
        \end{nonumbering}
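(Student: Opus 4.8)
The plan is to reduce the statement, through the geometry of lines, to an assertion about zero--cycles on a K3 surface. Recall first that $X=F(Y)$ carries a self--dual, multiplicative, generically defined Chow--K\"unneth decomposition (Shen--Vial), which produces the bigrading $A^\ast_{(\ast)}(X)$; by canonicity of this decomposition $\iota^\ast$ preserves each $A^i_{(j)}(X)$, so the statement is meaningful. The piece $A^4_{(0)}(X)$ is one--dimensional, and since $\iota^\ast$ preserves degrees of zero--cycles we get $\iota^\ast=\ide$ there; this disposes of $j=0$, and what remains is $\iota^\ast=-\ide$ on $A^2_{(2)}(X)$ and $A^4_{(2)}(X)$, and $\iota^\ast=\ide$ on $A^4_{(4)}(X)$.

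The geometric input is that $Y$ contains a plane. Indeed $p=[1:0:\cdots:0]$ is a smooth point of $Y$; the lines of $Y$ through $p$ are parametrised by the cubic surface $S:=\{g=\ell=0\}\subset\PP^4$, and for every line of $S$ the plane through $p$ spanned by it is contained in $Y$. Hence $Y$ lies on Hassett's divisor $\CC_8$ and has an associated degree--$2$ K3 surface $T$ (the double cover of $\PP^2$ branched along the discriminant of the quadric--surface bundle obtained by projecting $Y$ from such a plane). Now $\iota$ fixes $Y\cap\{X_0=0\}=\{g=0\}\subset\PP^4$ pointwise; in particular it fixes $S$ pointwise, hence fixes each of its lines and each of the corresponding planes. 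Therefore $\iota$ descends through the projection to an involution of $\PP^2$, preserving the discriminant curve, and lifts to an involution $\iota_T$ of $T$, which is non--symplectic because $\iota$ is. All the cycles entering these constructions (the incidence $P\subset X\times Y$, the chosen plane, the quadric bundle, the branch curve) are $\iota$--invariant, so the classical relations between $Y$, $F(Y)$ and $T$ refine to $\iota$--equivariant isomorphisms of Chow motives with $\QQ$--coefficients: $\mathfrak t^2(X)\cong\mathfrak t^2(T)$ (from Beauville--Donagi and the associated--K3 construction), $\mathfrak t^6(X)\cong\mathfrak t^2(T)(-2)$ (from self--duality of the Chow--K\"unneth decomposition of $X$, which is $\iota$--equivariant since the Pl\"ucker polarisation is $\iota$--invariant), and $\mathfrak t^4(X)\cong\mathrm{Sym}^2\mathfrak t^2(T)\oplus\mathfrak t^2(T)(-1)^{\oplus r}$ for some $r\ge 0$ (from the Shen--Vial description of $\mathfrak h(F(Y))$ in terms of $\mathfrak h(Y)$), the involution on the summand $\mathrm{Sym}^2\mathfrak t^2(T)$ being $\iota_T^\ast\otimes\iota_T^\ast$.

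One then concludes using a single elementary fact about K3 surfaces: a non--symplectic involution acts by $-\ide$ on $A^2_{(2)}(T)$, the group of zero--cycles of degree $0$ on $T$. Indeed $T/\iota_T$ is a smooth surface which is either rational (when $\iota_T$ has a fixed curve) or an Enriques surface (when $\iota_T$ is fixed--point--free), so that $A^2(T/\iota_T)=\QQ$ in both cases (by rational connectedness, respectively by Bloch--Kas--Lieberman); since $A^2(T/\iota_T)=A^2(T)^{+}$ and $A^2_{(0)}(T)^{+}=A^2_{(0)}(T)=\QQ$, this forces $A^2_{(2)}(T)^{+}=0$, i.e.\ $\iota_T^\ast=-\ide$ on $A^2_{(2)}(T)$. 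Transporting this through the isomorphisms above: $A^2_{(2)}(X)=A^2\big(\mathfrak t^2(X)\big)=A^2_{(2)}(T)$ and $A^4_{(2)}(X)=A^4\big(\mathfrak t^6(X)\big)=A^2\big(\mathfrak t^2(T)\big)=A^2_{(2)}(T)$, so $\iota^\ast=-\ide$ on both; and $A^4_{(4)}(X)=A^4\big(\mathfrak t^4(X)\big)=A^4\big(\mathrm{Sym}^2\mathfrak t^2(T)\big)\oplus A^4\big(\mathfrak t^2(T)(-1)\big)^{\oplus r}=A^4\big(\mathrm{Sym}^2\mathfrak t^2(T)\big)$, the second summand vanishing because $A^4\big(\mathfrak t^2(T)(-1)\big)=A^3\big(\mathfrak t^2(T)\big)$ is a direct summand of $A^3(T)=0$; and on $A^4\big(\mathrm{Sym}^2\mathfrak t^2(T)\big)$ the involution acts as $\iota_T^\ast\otimes\iota_T^\ast=(-\ide)\otimes(-\ide)=\ide$. (Alternatively, $A^4_{(4)}(X)$ is spanned by the intersection products $A^2_{(2)}(X)\cdot A^2_{(2)}(X)$ by multiplicativity of the Chow--K\"unneth decomposition, whence once more $\iota^\ast=(-1)(-1)\,\ide=\ide$ there.) Combined with the case $j=0$, this gives all the asserted equalities.

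The bulk of the work---and the main obstacle---lies in the geometric input of the second paragraph: one must check that $\iota$ genuinely descends to a non--symplectic involution of the associated K3 surface $T$, and, more seriously, that the cohomological relation $H^4_{\mathrm{prim}}(Y)\cong H^2_{\mathrm{prim}}(T)(-1)$ together with the Shen--Vial relation between $\mathfrak h(F(Y))$ and $\mathfrak h(Y)$ lift to $\iota$--equivariant isomorphisms of Chow motives with the stated action on $\mathrm{Sym}^2\mathfrak t^2(T)$. Once this is secured, the only cycle--theoretic content is the one--line remark that a non--symplectic involution kills the transcendental zero--cycles of a K3 surface, together with routine bookkeeping of Tate twists and signs. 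I expect the tracking of $\iota$ through the chain of birational maps defining $T$, and the verification of $\iota$--equivariance of the Shen--Vial decomposition, to be the real difficulty.
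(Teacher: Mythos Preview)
Your strategy is genuinely different from the paper's, and the central step does not go through as written.

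\textbf{What the paper does.} The paper never introduces an auxiliary K3 surface. Instead it works directly with the cubic $Y$ and proves, via Voisin's ``spread'' technique applied to the $14$--dimensional family of such cubics (Proposition~\ref{voisin2}), that $(\iota_Y)^\ast=-\ide$ on $A^3_{hom}(Y)$ (Theorem~\ref{main2}). The passage to $F(Y)$ then uses only three elementary Shen--Vial facts: $A^2_{(2)}(X)=({}^tP)_\ast A^3_{hom}(Y)$, the isomorphism $\cdot\,\ell\colon A^2_{(2)}\xrightarrow{\cong}A^4_{(2)}$ with $\iota^\ast\ell=\ell$, and the surjection $A^2_{(2)}\otimes A^2_{(2)}\twoheadrightarrow A^4_{(4)}$. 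No motivic decomposition of $F(Y)$ in terms of an associated K3 is invoked.

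\textbf{Where your argument breaks.} The load--bearing assertion is the trio of $\iota$--equivariant Chow--motivic isomorphisms $\mathfrak t^2(X)\cong\mathfrak t^2(T)$, $\mathfrak t^6(X)\cong\mathfrak t^2(T)(-2)$, and above all $\mathfrak t^4(X)\cong\mathrm{Sym}^2\mathfrak t^2(T)\oplus\mathfrak t^2(T)(-1)^{\oplus r}$. None of these is available off the shelf. There is no ``Shen--Vial description of $\mathfrak h(F(Y))$ in terms of $\mathfrak h(Y)$'' of the kind you cite; \cite{SV} constructs a CK decomposition of $F(Y)$ intrinsically, not by pulling apart $\mathfrak h(Y)$. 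For cubics on $\CC_8$ the Hodge isomorphism $H^4_{\mathrm{prim}}(Y)\cong H^2_{\mathrm{prim}}(T)(-1)$ is classical, but lifting it to Chow motives (let alone to a $\mathrm{Sym}^2$ statement for $\mathfrak t^4(X)$) is a serious theorem, and for a generic plane--containing cubic the Brauer class obstructs the birational identification $F(Y)\sim T^{[2]}$ that would make this accessible. Even granting such isomorphisms abstractly, you must then prove they intertwine $\iota$ with $\iota_T$; your claim that ``all the cycles entering these constructions are $\iota$--invariant'' does not suffice, since the correspondences realising motivic isomorphisms are not visibly built from those cycles. You acknowledge this as ``the real difficulty'', but it is not a detail to be filled in later---establishing those equivariant motivic isomorphisms would be substantially harder than the theorem you are trying to prove.

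Two smaller issues: first, an MCK decomposition is \emph{not} known for $F(Y)$ of an arbitrary smooth cubic (see Remark~\ref{pity}); only a CK decomposition exists in general, so your appeal to multiplicativity must be replaced by the specific Shen--Vial results quoted in Theorem~\ref{chowringfano}. Second, the CK decomposition of \cite{SV} is not known to be canonical, so ``by canonicity $\iota^\ast$ preserves $A^i_{(j)}(X)$'' is not a valid argument; the paper proves this compatibility separately (Lemma~\ref{compat}) by checking that the class $L\in A^2(X\times X)$ defining the Fourier decomposition is $(\iota\times\iota)$--invariant.
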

        
 The notation $A^\ast_{(\ast)}(X)$ in theorem \ref{main} refers to the Fourier decomposition of the Chow ring of $X$ constructed by Shen--Vial \cite{SV}. (We mention in passing that for $X$ as in theorem \ref{main}, it is unfortunately not yet known whether $A^\ast_{(\ast)}(X)$ is a bigraded ring, cf. remark \ref{pity} below.) 
 
 To prove theorem \ref{main}, we employ the fact that the family of cubics under consideration is sufficiently large for the method of ``spread'' developed by Voisin \cite{V0}, \cite{V1} to apply. It is worth mentioning that the action of polarized {\em symplectic\/} automorphisms on Chow groups of Fano varieties of cubic fourfolds has already been treated by L. Fu \cite{LFu2}, similarly using the method of ``spread''.  
        
  Theorem \ref{main} has some rather striking consequences for the Chow ring of the quotient (this quotient is a slightly singular Calabi--Yau fourfold):
  
 \begin{nonumberingc}[=corollary \ref{cor}] Let $(X,\iota)$ be as in theorem \ref{main}. Let $Z:=X/\iota$ be the quotient. Then
  the image of the intersection product map
   \[ A^2(Z)\otimes A^2(Z)\ \to\ A^4(Z) \]
   has dimension $1$.
 \end{nonumberingc}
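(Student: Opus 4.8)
The plan is to translate everything into the $\iota$--invariant part of the Chow ring of $X$. Let $\pi\colon X\to Z$ be the quotient morphism. Since $Z$ has only finite quotient singularities, $CH^\ast(Z)_\QQ$ carries a ring structure, and the usual transfer argument (from $\pi_\ast\pi^\ast=2\cdot\ide$ and $\pi^\ast\pi_\ast=\ide+\iota^\ast$) shows that $\pi^\ast\colon A^j(Z)\to A^j(X)$ is an injective ring homomorphism onto $A^j(X)^\iota$. Consequently the image of $A^2(Z)\otimes A^2(Z)\to A^4(Z)$ has the same $\QQ$--dimension as
\[ R\ :=\ \bigl\langle\, \alpha\cdot\beta\ :\ \alpha,\beta\in A^2(X)^\iota\,\bigr\rangle\ \subseteq\ A^4(X)\ ,\]
so it suffices to prove $\dim_\QQ R=1$.

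I would first establish the upper bound $\dim_\QQ R\le 1$. For a hyperk\"ahler fourfold of $K3^{[2]}$--type one has $A^2(X)=A^2_{(0)}(X)\oplus A^2_{(2)}(X)$; theorem \ref{main} gives $\iota^\ast=-\ide$ on $A^2_{(2)}(X)$, hence $A^2_{(2)}(X)^\iota=0$, and therefore
\[ A^2(X)^\iota\ =\ A^2_{(0)}(X)^\iota\ \subseteq\ A^2_{(0)}(X)\ .\]
By the (partial) multiplicativity of the Fourier decomposition for $X=F(Y)$ established by Shen--Vial \cite{SV}, one has $A^2_{(0)}(X)\cdot A^2_{(0)}(X)\subseteq A^4_{(0)}(X)$, and $A^4_{(0)}(X)=\QQ\,\mathfrak{o}_X$ is one--dimensional, spanned by the Shen--Vial canonical zero--cycle (which is $\iota$--invariant by theorem \ref{main}). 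Hence $R\subseteq\QQ\,\mathfrak{o}_X$, giving $\dim_\QQ R\le 1$.

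For the lower bound I would note that the Pl\"ucker polarization $g\in A^1(X)$ is $\iota$--invariant, since $\iota$ is induced by a linear automorphism of $\PP^5$; hence $g^2\in A^2(X)^\iota$ and $g^4=g^2\cdot g^2\in R$, while $\int_X g^4=108\neq 0$ forces $R\neq 0$. Combining the two estimates yields $\dim_\QQ R=1$, i.e.\ the image of the intersection product on $Z$ is the line spanned by the class of a point.

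The only input that is not purely formal is the inclusion $A^2_{(0)}(X)\cdot A^2_{(0)}(X)\subseteq A^4_{(0)}(X)$, equivalently the statement that the subring of $A^\ast(X)$ generated by $A^1(X)$ and the canonical codimension--$2$ class entering the Fourier decomposition meets $A^4(X)$ only in the line $\QQ\,\mathfrak{o}_X$. This is precisely the part of the conjectural bigraded--ring structure that is already known for $F(Y)$; the behaviour of $A^2_{(2)}(X)$ under products, which is not (cf.\ remark \ref{pity}), does not enter, because theorem \ref{main} kills $A^2_{(2)}(X)^\iota$. The other point requiring a little care is merely bookkeeping: $Z$ is singular along the fixed locus of $\iota$, so one should phrase the whole argument in terms of $CH^\ast(Z)_\QQ$ and the quotient--singularity formalism rather than transporting smooth--variety statements verbatim.
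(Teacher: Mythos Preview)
Your argument has a genuine gap at the key step: the inclusion
\[
A^2_{(0)}(X)\cdot A^2_{(0)}(X)\ \subseteq\ A^4_{(0)}(X)
\]
is \emph{not} known for the Fano variety of an arbitrary smooth cubic fourfold. You have misread remark~\ref{pity}: that remark singles out precisely this inclusion as the open case (cf.\ \cite[Section 22.3]{SV}). It is established only for \emph{very general} cubics, and the cubics of theorem~\ref{main} form a $14$--dimensional family, so none of them is very general. Your last paragraph therefore asserts the opposite of what is true.

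The paper's proof repairs exactly this point. What \emph{is} known for every smooth cubic fourfold is the weaker inclusion
\[
A^2_{(0)}(X)\cdot A^2_{(0)}(X)\ \subseteq\ A^4_{(0)}(X)\oplus A^4_{(2)}(X)
\]
(\cite[Proposition 22.8]{SV}). Combining this with the observation that the image also lies in $A^4(X)^\iota$, one then invokes theorem~\ref{main} a second time: since $\iota^\ast=-\ide$ on $A^4_{(2)}(X)$ and $\iota^\ast=\ide$ on $A^4_{(0)}(X)$, the intersection
\[
\bigl(A^4_{(0)}(X)\oplus A^4_{(2)}(X)\bigr)\cap A^4(X)^\iota
\]
equals $A^4_{(0)}(X)$. (One needs lemma~\ref{compat} here to know that $\iota^\ast$ respects the pieces, so that the decomposition of an $\iota$--invariant class into its $(0)$-- and $(2)$--parts is unique.) Everything else in your write--up --- the transfer isomorphism $A^\ast(Z)\cong A^\ast(X)^\iota$, the inclusion $A^2(X)^\iota\subseteq A^2_{(0)}(X)$, and the nonvanishing via $g^4$ or $\ell^2$ --- matches the paper and is fine.
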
     
                          
This means that for any $2$--cycles $b,c\in A^2(Z)$, the $0$--cycle $b\cdot c$ is rationally trivial if and only if it has degree $0$. This is similar to results for Calabi--Yau complete intersections obtained in \cite{V13}, \cite{LFu}.

\begin{nonumberingc}[=corollary \ref{cor2}] Let $(X,\iota)$ be as in theorem \ref{main}. Let $Z:=X/\iota$ be the quotient.
 Then the image of the intersection product map
    \[  A^2(Z)\otimes A^1(Z)\xrightarrow{} A^3(Z)  \]
    is a finite--dimensional $\QQ$--vector space.
\end{nonumberingc}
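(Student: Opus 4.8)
The plan is to transport the question to the hyperk\"ahler fourfold $X$ and then to observe that, thanks to theorem \ref{main}, both factors of the intersection product map become finite--dimensional. Since we are working with $\QQ$--coefficients and $Z=X/\iota$ is the quotient by a finite group, the pullback along $\pi\colon X\to Z$ identifies $A^i(Z)$ with the invariant subspace $A^i(X)^\iota$ (use $\pi_\ast\pi^\ast=2\cdot\ide$ together with averaging; this is valid despite $Z$ being singular). As $\pi^\ast$ is a ring homomorphism, the image of $A^2(Z)\otimes A^1(Z)\to A^3(Z)$ is carried isomorphically onto the image of
  \[ A^2(X)^\iota\otimes A^1(X)^\iota\ \longrightarrow\ A^3(X)^\iota\ . \]
Hence it suffices to prove that $A^1(X)^\iota$ and $A^2(X)^\iota$ are both finite--dimensional $\QQ$--vector spaces: the image above is then the span of the finitely many products $a_k\cdot b_\ell$ of elements of chosen bases $\{a_k\}$ of $A^2(X)^\iota$ and $\{b_\ell\}$ of $A^1(X)^\iota$. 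I emphasise that this route deliberately never multiplies two graded pieces of the Fourier decomposition, which is the point, since it is not known whether $A^\ast_{(\ast)}(X)$ is a bigraded ring (remark \ref{pity}).

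Finite--dimensionality of $A^1(X)^\iota$ is immediate: $X$ is hyperk\"ahler, so $H^1(X,\OO_X)=0$, whence $A^1(X)=\Pic(X)_\QQ=\mathrm{NS}(X)_\QQ$ is finite--dimensional and $A^1(X)^\iota$ is a subspace of it. For $A^2(X)^\iota$ I would use the Shen--Vial splitting $A^2(X)=A^2_{(0)}(X)\oplus A^2_{(2)}(X)$. By theorem \ref{main}, $\iota^\ast$ preserves the summand $A^2_{(2)}(X)$ and acts on it as $-\ide$; therefore $A^2(X)^\iota\cap A^2_{(2)}(X)=0$, so the projection with kernel $A^2_{(2)}(X)$ restricts to an injection $A^2(X)^\iota\hookrightarrow A^2_{(0)}(X)$. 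As $A^2_{(0)}(X)$ is finite--dimensional (part of the structure results of \cite{SV}; concretely it is spanned by products of divisor classes together with a Chern class of $X$), we obtain $\dim_\QQ A^2(X)^\iota\le\dim_\QQ A^2_{(0)}(X)<\infty$, which completes the argument.

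I do not expect a genuine obstacle once theorem \ref{main} is available: the corollary is then essentially formal. The two points that deserve a word are the identification $A^i(Z)\cong A^i(X)^\iota$ (routine, but worth flagging because $Z$ is singular) and the appeal to the finite--dimensionality of $A^2_{(0)}(X)$ --- the one place where structural input from \cite{SV} is genuinely used; alternatively one can invoke injectivity of the cycle class map on $A^2_{(0)}(X)$.
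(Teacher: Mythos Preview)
Your overall strategy --- transport to $X$, observe $A^2(X)^\iota$ injects into $A^2_{(0)}(X)$, and multiply by the finite--dimensional $A^1(X)$ --- is exactly the route the paper takes. The gap is in the very last step: you assert that $A^2_{(0)}(X)$ is finite--dimensional (equivalently, that the cycle class map is injective on $A^2_{(0)}(X)$), but this is \emph{not} known for the Fano variety of an arbitrary smooth cubic fourfold. What you are invoking is precisely the statement $A^2_{(0)}(X)\cap A^2_{hom}(X)=0$, i.e.\ Murre's conjecture~D in this degree, which is flagged in the paper as ``expected (but not proven~!)''. The description ``spanned by products of divisor classes together with a Chern class'' is the conjectural picture, not an established fact.

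The paper circumvents this by using a weaker but proven result from \cite{SV}: one has the exact sequence
\[ 0\ \to\ A^2_{(0),hom}(X)\ \to\ A^2_{(0)}(X)\ \to\ N^2 H^4(X)\ \to\ 0\ ,\]
and \cite[Proposition~22.4]{SV} shows that the intersection map $A^2_{(0),hom}(X)\otimes A^1(X)\to A^3(X)$ is the zero map. Hence, choosing lifts $b_1,\ldots,b_r\in A^2_{(0)}(X)$ of a basis of the finite--dimensional space $N^2 H^4(X)$, the image of $A^2_{(0)}(X)\otimes A^1(X)\to A^3(X)$ coincides with the image of $\langle b_1,\ldots,b_r\rangle\otimes A^1(X)$, which is finite--dimensional. (The very existence of Proposition~22.4 as a nontrivial statement in \cite{SV} is a signal that $A^2_{(0),hom}(X)$ is not known to vanish.) So your argument becomes correct once you replace the unproven finite--dimensionality of $A^2_{(0)}(X)$ by this vanishing of products with $A^2_{(0),hom}(X)$.
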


Corollaries \ref{cor} and \ref{cor2} provide some (admittedly very meagre) support in favour of the conjecture that 
  \[ A^2_{hom}(Z)\stackrel{??}{=}0\ .\] 
  (The Bloch--Beilinson conjectures would imply that $A^2_{AJ}(M)=0$ for any Calabi--Yau variety $M$ of dimension $>2$. As far as I am aware, there is not a single Calabi--Yau variety $M$ for which this is known to be true.)

 \vskip0.6cm

\begin{convention} In this article, the word {\sl variety\/} will refer to a reduced irreducible scheme of finite type over $\C$. A {\sl subvariety\/} is a (possibly reducible) reduced subscheme which is equidimensional. 

{\bf All Chow groups will be with rational coefficients}: we will denote by $A_j(X)$ the Chow group of $j$--dimensional cycles on $X$ with $\QQ$--coefficients; for $X$ smooth of dimension $n$ the notations $A_j(X)$ and $A^{n-j}(X)$ are used interchangeably. 

The notations $A^j_{hom}(X)$, $A^j_{AJ}(X)$ will be used to indicate the subgroups of homologically trivial, resp. Abel--Jacobi trivial cycles.
For a morphism $f\colon X\to Y$, we will write $\Gamma_f\in A_\ast(X\times Y)$ for the graph of $f$.
The contravariant category of Chow motives (i.e., pure motives with respect to rational equivalence as in \cite{Sc}, \cite{MNP}) will be denoted $\MM_{\rm rat}$.



We will write $H^j(X)$ 
to indicate singular cohomology $H^j(X,\QQ)$.

\end{convention}

\section{Preliminaries}

\subsection{MCK decomposition}
\label{ss1}

\begin{definition}[Murre \cite{Mur}] Let $X$ be a smooth projective variety of dimension $n$. We say that $X$ has a {\em CK decomposition\/} if there exists a decomposition of the diagonal
   \[ \Delta_X= \pi_0+ \pi_1+\cdots +\pi_{2n}\ \ \ \hbox{in}\ A^n(X\times X)\ ,\]
  such that the $\pi_i$ are mutually orthogonal idempotents and $(\pi_i)_\ast H^\ast(X)= H^i(X)$.
  
  (NB: ``CK decomposition'' is shorthand for ``Chow--K\"unneth decomposition''.)
\end{definition}

\begin{remark} The existence of a CK decomposition for any smooth projective variety is part of Murre's conjectures \cite{Mur}, \cite{J2}. 
\end{remark}

\begin{definition}[Shen--Vial \cite{SV}] Let $X$ be a smooth projective variety of dimension $n$. Let $\Delta_X^{sm}\in A^{2n}(X\times X\times X)$ be the class of the small diagonal
  \[ \Delta_X^{sm}:=\bigl\{ (x,x,x)\ \vert\ x\in X\bigr\}\ \subset\ X\times X\times X\ .\]
  An {\em MCK decomposition\/} is a CK decomposition $\{\pi^X_i\}$ of $X$ that is {\em multiplicative\/}, i.e. it satisfies
  \[ \pi^X_k\circ \Delta_X^{sm}\circ (\pi^X_i\times \pi^X_j)=0\ \ \ \hbox{in}\ A^{2n}(X\times X\times X)\ \ \ \hbox{for\ all\ }i+j\not=k\ .\]
  
 (NB: ``MCK decomposition'' is shorthand for ``multiplicative Chow--K\"unneth decomposition''.) 
  
 A {\em weak MCK decomposition\/} is a CK decomposition $\{\pi^X_i\}$ of $X$ that satisfies
    \[ \Bigl(\pi^X_k\circ \Delta_X^{sm}\circ (\pi^X_i\times \pi^X_j)\Bigr){}_\ast (a\times b)=0 \ \ \ \hbox{for\ all\ } a,b\in\ A^\ast(X)\ .\]
  \end{definition}
  
  \begin{remark} The small diagonal (seen as a correspondence from $X\times X$ to $X$) induces the {\em multiplication morphism\/}
    \[ \Delta_X^{sm}\colon\ \  h(X)\otimes h(X)\ \to\ h(X)\ \ \ \hbox{in}\ \MM_{\rm rat}\ .\]
 Suppose $X$ has a CK decomposition
  \[ h(X)=\bigoplus_{i=0}^{2n} h^i(X)\ \ \ \hbox{in}\ \MM_{\rm rat}\ .\]
  By definition, this decomposition is multiplicative if for any $i,j$ the composition
  \[ h^i(X)\otimes h^j(X)\ \to\ h(X)\otimes h(X)\ \xrightarrow{\Delta_X^{sm}}\ h(X)\ \ \ \hbox{in}\ \MM_{\rm rat}\]
  factors through $h^{i+j}(X)$.
  
  If $X$ has a weak MCK decomposition, then setting
    \[ A^i_{(j)}(X):= (\pi^X_{2i-j})_\ast A^i(X) \ ,\]
    one obtains a bigraded ring structure on the Chow ring: that is, the intersection product sends $A^i_{(j)}(X)\otimes A^{i^\prime}_{(j^\prime)}(X) $ to  $A^{i+i^\prime}_{(j+j^\prime)}(X)$.
    
      It is expected (but not proven !) that for any $X$ with a weak MCK decomposition, one has
    \[ A^i_{(j)}(X)\stackrel{??}{=}0\ \ \ \hbox{for}\ j<0\ ,\ \ \ A^i_{(0)}(X)\cap A^i_{hom}(X)\stackrel{??}{=}0\ ;\]
    this is related to Murre's conjectures B and D, that have been formulated for any CK decomposition \cite{Mur}.

  The property of having an MCK decomposition is severely restrictive, and is closely related to Beauville's ``(weak) splitting property'' \cite{Beau3}. For more ample discussion, and examples of varieties with an MCK decomposition, we refer to \cite[Section 8]{SV}, as well as \cite{V6}, \cite{SV2}, \cite{FTV}.
    \end{remark}

In what follows, we will make use of the following: 

\begin{theorem}[Shen--Vial \cite{SV}]\label{fanomck} Let $Y\subset\PP^5(\C)$ be a smooth cubic fourfold, and let $X:=F(Y)$ be the Fano variety of lines in $Y$. There exists a CK decomposition $\{\pi^X_i\}$ for $X$, and 
  \[ (\pi^X_{2i-j})_\ast A^i(X) = A^i_{(j)}(X)\ ,\]
  where the right--hand side denotes the splitting of the Chow groups defined in terms of the Fourier transform as in \cite[Theorem 2]{SV}. Moreover, we have
  \[ A^i_{(j)}(X)=0\ \ \ \hbox{for\ }j<0\ \hbox{and\ for\ }j>i\ .\]
  
  In case $Y$ is very general, the Fourier decomposition $A^\ast_{(\ast)}(X)$ forms a bigraded ring, and hence
  $\{\pi^X_i\}$ is a weak MCK decomposition.
    \end{theorem}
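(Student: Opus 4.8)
The plan is to obtain the Chow--Künneth projectors and the Fourier grading simultaneously from a single canonical self--correspondence on $X=F(Y)$, and then to promote the grading to a bigraded \emph{ring} in the very general case. Since $H^{\mathrm{odd}}(X)=0$ by Beauville--Donagi, the sought CK decomposition will involve only the five even projectors $\pi^X_0,\pi^X_2,\pi^X_4,\pi^X_6,\pi^X_8$. \emph{Construction of the Fourier kernel.} Let $I\subset X\times X$ be the incidence locus of pairs of lines $(\ell,\ell^\prime)$ meeting in $Y$, let $g\in A^1(X)$ be the Plücker polarization, and write $g_1,g_2$ for its two pullbacks to $X\times X$. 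I would set
\[ L:=I-\bigl(\text{a fixed polynomial in }g_1,g_2\bigr)\in A^2(X\times X), \]
the correction being chosen to kill the non--primitive contributions to $[I]$, so that $[L]\in H^4(X\times X)$ represents a multiple of the Beauville--Bogomolov class $\Bb$ dual to the B--B form on $H^2(X)$.

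\emph{Calibration, projectors, and vanishing.} Using the Beauville--Donagi description of $H^*(X)$---in particular the Hodge isometry $H^4_{\mathrm{prim}}(Y)\cong H^2_{\mathrm{prim}}(X)$ induced by $I$---I would compute the action of $L_*$ on each $H^k(X)$ and check that it agrees with the operator attached to $\Bb$. Cup product with $g$ and the operator $L$ then furnish an $\mathfrak{sl}_2$--package, and the projectors $\pi^X_i$ are produced as explicit universal polynomials in $L$, the graph of multiplication by $g$, and $\Delta_X$; their mutual orthogonality and the property $(\pi^X_i)_*H^*(X)=H^i(X)$ are purely cohomological checks. Setting $A^i_{(j)}(X):=(\pi^X_{2i-j})_*A^i(X)$ then recovers the eigenspace decomposition of the Fourier transform of \cite{SV}, whose kernel is itself a universal polynomial in $L$; this gives the asserted identity. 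The vanishing $A^i_{(j)}(X)=0$ for $j<0$ and $j>i$ follows from the eigenvalue constraints on the $L$--action on a fourfold, together with the Lefschetz decomposition, the bound $j\le i$ being the Chow--level shadow of the absence of the holomorphic forms that would be needed to support such pieces.

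\emph{Multiplicativity for very general $Y$---the main obstacle.} The hard step is to show $A^i_{(j)}(X)\cdot A^{i^\prime}_{(j^\prime)}(X)\subset A^{i+i^\prime}_{(j+j^\prime)}(X)$. Formally this is equivalent to a Fourier \emph{exchange} identity---the hyperkähler analogue of $\mathcal F(a\cdot b)=\mathcal F(a)\star\mathcal F(b)$ on abelian varieties---which unwinds to an explicit relation expressing the small diagonal $\Delta_X^{sm}\in A^8(X\times X\times X)$ through $L$, $g$ and the $\pi^X_i$. Establishing this cycle identity is exactly where genericity is indispensable: for very general $Y$ the group $A^2(X)$ is as small as possible, its homologically trivial part being the transcendental piece $A^2_{(2)}(X)$ controlled by Voisin's analysis of the Chow ring of $F(Y)$, so the relation can be verified on a small set of generators and then propagated over the family by the spreading technique of Voisin \cite{V0}, \cite{V1}. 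Once the exchange identity is in place, the bigraded ring structure---and hence the weak MCK property---follows formally.
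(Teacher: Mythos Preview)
The paper's own ``proof'' of this theorem is a pure citation: the CK decomposition and its compatibility with the Fourier splitting are \cite[Theorem~3.3]{SV}, the vanishing $A^i_{(j)}(X)=0$ for $j<0$ and $j>i$ is Murre's conjecture~B as established in \cite[Theorem~3.3]{SV}, and the bigraded ring statement for very general $Y$ is \cite[Theorem~3]{SV}. You are instead sketching the internal architecture of Shen--Vial's argument, so let me compare your outline to what they actually do.

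Your construction of the Fourier kernel and of the projectors is in the right spirit, with one correction: the cycle $L$ is not $I$ minus a polynomial in $g_1,g_2$ alone. One also needs the second Chern class $c=c_2(\EE_2)$ of the tautological bundle; concretely (see \cite[Equation~(107)]{SV}, quoted later in this paper)
\[
L=\tfrac{1}{3}\bigl(g_1^2+\tfrac{3}{2}g_1g_2+g_2^2-c_1-c_2\bigr)-I\ \in\ A^2(X\times X)\ .
\]
The projectors are then extracted from the eigenspace decomposition of $(L^2)_\ast$ rather than from an $\mathfrak{sl}_2$--triple involving $g$; your ``eigenvalue constraints'' explanation of the vanishing for $j<0$ and $j>i$ is essentially correct---this is exactly how \cite{SV} verify Murre's conjecture~B---but the phrase ``Chow--level shadow of the absence of holomorphic forms'' is a heuristic, not an argument.

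The substantive problem is your account of the multiplicativity for very general $Y$. You say the small--diagonal relation is ``verified on a small set of generators and then propagated over the family by the spreading technique of Voisin \cite{V0}, \cite{V1}''. This is backwards. Spreading takes a fibrewise relation and upgrades it to a relative one, thereby propagating it to \emph{all} fibres; if that worked here, one would obtain the bigraded ring structure for \emph{every} smooth cubic, which is exactly what Remark~\ref{pity} records as open. In \cite{SV} the hypothesis ``$Y$ very general'' enters for the opposite reason: it forces certain auxiliary groups to vanish (for instance $A^2_{(0)}(X)\cap A^2_{hom}(X)=0$, since a very general cubic has no extra Hodge classes), and this vanishing is what allows the remaining multiplicativity relations---in particular the delicate inclusion $A^2_{(0)}(X)\cdot A^2_{(0)}(X)\subset A^4_{(0)}(X)$ discussed in \cite[Section~22.3]{SV}---to be checked directly. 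So the genericity is a \emph{hypothesis} enabling a pointwise verification, not the output of a family argument.
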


\begin{proof} (A remark on notation: what we denote $A^i_{(j)}(X)$ is denoted $CH^i(X)_j$ in \cite{SV}.)

The existence of a CK decomposition $\{\pi^X_i\}$ is \cite[Theorem 3.3]{SV}, combined with the results in \cite[Section 3]{SV} to ensure that the hypotheses of \cite[Theorem 3.3]{SV} are satisfied. According to \cite[Theorem 3.3]{SV}, the given CK decomposition agrees with the Fourier decomposition of the Chow groups. The ``moreover'' part is because the $\{\pi^X_i\}$ are shown to satisfy Murre's conjecture B \cite[Theorem 3.3]{SV}.

The statement for very general cubics is \cite[Theorem 3]{SV}.
    \end{proof}

\begin{remark}\label{pity} Unfortunately, it is not yet known that the Fourier decomposition of \cite{SV} induces a bigraded ring structure on the Chow ring for {\em all\/} Fano varieties of smooth cubic fourfolds. For one thing, it has not yet been proven that 
  \[  A^2_{(0)}(X)\cdot A^2_{(0)}(X)\ \stackrel{??}{\subset}\  A^4_{(0)}(X) \] 
  for the Fano variety of a given (not necessarily very general) cubic fourfold
  (cf. \cite[Section 22.3]{SV} for discussion). 
  
To prove that $A^\ast_{(\ast)}()$ is a bigraded ring for all Fano varieties of smooth cubic fourfolds, it would suffice to construct an MCK decomposition for the Fano variety of the very general cubic fourfold.
\end{remark}



\subsection{A multiplicative result}
\label{ss2}

Let $X$ be the Fano variety of lines on a smooth cubic fourfold. As we have seen (theorem \ref{fanomck}), the Chow ring of $X$ splits into pieces $A^i_{(j)}(X)$.
The work \cite{SV} contains a thorough analysis of the multiplicative behaviour of these pieces. Here are the relevant results we will be needing:

\begin{theorem}[Shen--Vial \cite{SV}]\label{chowringfano} Let $Y\subset\PP^5(\C)$ be a smooth cubic fourfold, and let $X:=F(Y)$ be the Fano variety of lines in $Y$. 

\noindent
(\rom1) There exists $\ell\in A^2_{(0)}(X)$ such that intersecting with $\ell$ induces an isomorphism
  \[ \cdot\ell\colon\ \ \ A^2_{(2)}(X)\ \xrightarrow{\cong}\ A^4_{(2)}(X)\ .\]

\noindent
(\rom2) Intersection product induces a surjection
  \[ A^2_{(2)}(X)\otimes A^2_{(2)}(X)\ \twoheadrightarrow\ A^4_{(4)}(X)\ .\]
\end{theorem}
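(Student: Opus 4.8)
The plan is to deduce both statements from the Fourier--theoretic study of $A^\ast(F(Y))$ carried out by Shen and Vial in \cite{SV}, rather than to reprove them. First I would fix the class $\ell$: it is the class denoted $\mathfrak{l}$ in \cite{SV}, an explicit $\QQ$--linear combination of $g^2$ and a tautological second Chern class $c_2\in A^2(X)$, where $g\in A^1(X)$ is the Pl\"ucker polarization; Shen--Vial verify that $\mathfrak{l}$ lies in $A^2_{(0)}(X)$. The whole apparatus rests on the quadratic relation satisfied in $A^4(X\times X)$ by the incidence correspondence $I=\{(\ell,\ell')\ :\ \ell\cap\ell'\neq\emptyset\}$ (of the shape $I^2 = a\,\Delta_X + I\cdot(g\times 1 + 1\times g)+\cdots$), from which \cite{SV} produce the correspondence underlying the Fourier transform $\mathcal F$, together with an $\mathfrak{sl}_2$--type pair of operators $(\,\cdot\,\mathfrak{l}\,,\,\Lambda\,)$ acting on $A^\ast(X)$; the pieces $A^i_{(j)}(X)$ are governed by this action.

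For (\rom1) I would invoke the hard--Lefschetz isomorphism for this $\mathfrak{sl}_2$--action in the relevant bidegree. Concretely, the Shen--Vial description identifies $A^4_{(2)}(X)$ with $\mathfrak{l}\cdot A^2_{(2)}(X)$, which yields surjectivity of $\cdot\,\mathfrak{l}$, and it also gives the injectivity of this map, an inverse being realized by an explicit correspondence coming from the Fourier calculus. So, once the notation of \cite{SV} is matched to the statement here, (\rom1) comes down to bookkeeping, and I would simply cite the relevant numbered result together with the construction of $\mathfrak{l}$.

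For (\rom2) the content is the description of the ``top transcendental'' piece $A^4_{(4)}(X)$. The cohomological mechanism is that for the Fano variety the cup product gives an isomorphism $\hbox{Sym}^2 H^2(X)\xrightarrow{\cong}H^4(X)$, under which the smallest sub--Hodge--structure carrying $H^{4,0}(X)=\C\cdot\sigma^2$ lands inside $\hbox{Sym}^2$ of the weight--$2$ transcendental part of $H^2(X)$; hence it is spanned by cup products of degree--$2$ transcendental classes. \cite{SV} promote this to rational equivalence, realizing the multiplication map $A^2_{(2)}(X)\otimes A^2_{(2)}(X)\to A^4_{(4)}(X)$ through the small--diagonal correspondence and proving it surjective; I would also remark that this surjectivity alone, unlike the full bigraded--ring statement of \cite[Theorem 3]{SV}, does not require $Y$ to be very general. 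If one wanted a self--contained argument in place of a citation, the main obstacle would be exactly this lifting in (\rom2): one has to control $A^4_{(4)}(X)$ tightly enough to see that it is generated by products, which in the end combines the non--formal geometry of the Fano variety (the quadratic relation for $I$ and the isomorphism $\hbox{Sym}^2 H^2(X)\cong H^4(X)$) with a Bloch--Beilinson--flavoured argument to pass from cohomology to Chow groups. By contrast, (\rom1) is soft once the Fourier formalism of \cite{SV} is available.
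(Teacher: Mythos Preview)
Your approach is correct and essentially the same as the paper's: both treat this as a direct citation of Shen--Vial rather than an independent argument. The paper is even terser than you are, simply recording that (\rom1) is \cite[Theorem 4]{SV} and (\rom2) is \cite[Proposition 20.3]{SV}; your surrounding explanation of the Fourier/$\mathfrak{sl}_2$ formalism and the $\hbox{Sym}^2 H^2\cong H^4$ input is accurate context but not needed for the proof itself.
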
 
     
 \begin{proof} Statement (\rom1) is \cite[Theorem 4]{SV}. Statement (\rom2) is \cite[Proposition 20.3]{SV}.
  \end{proof}

  \subsection{The involution}
  
  \begin{lemma}\label{inv} Let $\iota_\PP\in\aut(\PP^5(\C))$ be the involution defined as
    \[  [X_0,X_1,\ldots,X_5]\ \mapsto\ [-X_0,X_1,\ldots,X_5]\ .\]
  The cubic fourfolds invariant under $\iota_\PP$ are exactly those defined by an equation  
  \[  (X_0)^2 \ell(X_1,\ldots,X_5)+ g(X_1,\ldots,X_5)=0\ .\]
  Let $Y\subset\PP^5(\C)$ be a smooth cubic invariant under $\iota_\PP$, and let $\iota_Y\in\aut(Y)$ be the involution induced by $\iota_\PP$. Let $X=F(Y)$ be the Fano variety of lines in $Y$, and let $\iota\in\aut(X)$ be the involution induced by $\iota_Y$. The involution $\iota$ is anti--symplectic.
      \end{lemma}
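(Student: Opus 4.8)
The plan is to establish the three assertions of the lemma in turn; only the first and the last require a genuine argument.

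\emph{Normal form of invariant cubics.} Write a general cubic form in $X_0,\dots,X_5$ as
\[ F = a\,(X_0)^3 + (X_0)^2\,\ell(X_1,\dots,X_5) + X_0\,q(X_1,\dots,X_5) + g(X_1,\dots,X_5)\ , \]
with $a\in\C$, $\ell$ linear, $q$ quadratic and $g$ cubic in $X_1,\dots,X_5$. Since $\iota_\PP$ is an involution, a cubic hypersurface $\{F=0\}$ is $\iota_\PP$--invariant only if $\iota_\PP^\ast F=\lambda F$ for some $\lambda$ with $\lambda^2=1$. As $\iota_\PP^\ast F=-a(X_0)^3+(X_0)^2\ell-X_0 q+g$, the case $\lambda=1$ forces $a=q=0$, yielding precisely $F=(X_0)^2\ell+g$; the case $\lambda=-1$ forces $\ell=g=0$, yielding $F=X_0\big(a(X_0)^2+q\big)$, a reducible and hence singular cubic. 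This proves the first assertion (and shows along the way that smoothness of $Y$ forces $\ell\neq0$ and $g\neq0$).

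\emph{The induced involutions.} An automorphism of $\PP^5(\C)$ preserving $Y$ restricts to an automorphism $\iota_Y$ of $Y$, again an involution; and since $\iota_Y$ maps a line of $Y$ to a line of $Y$, it lifts to an involution $\iota$ of $X=F(Y)$. This is formal.

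\emph{The involution $\iota$ is anti--symplectic.} It suffices to show that $\iota^\ast$ acts as $-\ide$ on the one--dimensional space $H^{2,0}(X)$. By the theorem of Beauville--Donagi, the universal line $P\subset X\times Y$ induces, via the Abel--Jacobi type correspondence $p_\ast q^\ast$ (with $p\colon P\to X$ and $q\colon P\to Y$ the two projections), an isomorphism of Hodge structures identifying $H^4_{\mathrm{prim}}(Y)$ (up to twist) with $H^2(X)$; in particular it restricts to an isomorphism $H^{3,1}(Y)\xrightarrow{\ \cong\ }H^{2,0}(X)$. Since $(\iota\times\iota_Y)$ preserves $P$, this correspondence is equivariant, so it intertwines $\iota_Y^\ast$ on $H^{3,1}(Y)$ with $\iota^\ast$ on $H^{2,0}(X)$; we are thus reduced to the action of $\iota_Y^\ast$ on $H^{3,1}(Y)$. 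The latter space is spanned by the Griffiths residue of $\Omega/F^2$, where $\Omega=\sum_{i=0}^5(-1)^i X_i\,dX_0\wedge\cdots\wedge\widehat{dX_i}\wedge\cdots\wedge dX_5$ is the contraction of $dX_0\wedge\cdots\wedge dX_5$ by the Euler vector field. Now $\iota_\PP$ preserves the Euler vector field and sends $dX_0\wedge\cdots\wedge dX_5$ to its negative, so $\iota_\PP^\ast\Omega=-\Omega$; combined with $\iota_\PP^\ast F=F$ from the first step, this gives $\iota_\PP^\ast(\Omega/F^2)=-\Omega/F^2$, and by naturality of the residue map $\iota_Y^\ast=-\ide$ on $H^{3,1}(Y)$. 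Hence $\iota^\ast=-\ide$ on $H^{2,0}(X)$, as required. The only slightly delicate point in all of this is the sign bookkeeping in this last step — verifying $\iota_\PP^\ast\Omega=-\Omega$, and invoking the equivariance of the Beauville--Donagi isomorphism with the correct variance; the rest of the argument is routine.
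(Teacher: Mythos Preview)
Your proof is correct and follows essentially the same approach as the paper: the anti--symplectic claim is reduced, via the equivariant Beauville--Donagi isomorphism $H^4(Y)\cong H^2(X)$, to the computation of $\iota_Y^\ast$ on $H^{3,1}(Y)$ using the Griffiths residue $\Omega/f^2$, exactly as the paper sketches (citing \cite{Cam}). Your treatment is in fact more detailed --- you spell out the normal form argument for invariant cubics (including the exclusion of the $\lambda=-1$ case as reducible) and the sign computation $\iota_\PP^\ast\Omega=-\Omega$, both of which the paper leaves implicit.
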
   
      
   \begin{proof} The only thing that needs explaining is the last phrase; this is proven in \cite[Section 7]{Cam}. The idea is that there is an isomorphism of Hodge structures, compatible with the involution
   \[ H^2(X)\cong H^4(Y)\ .\]
   The action of $\iota_Y$ on $H^{3,1}(Y)$ is minus the identity, because $H^{3,1}(Y)$ is generated by the meromorphic form
   \[  \sum_{i=0}^5  (-1)^i X_i  {dX_0\wedge \ldots\wedge d\hat{X_i}\wedge\ldots\wedge dX_5 \over  f^2}\ ,\]
   where $f$ is an equation for $Y$.
   \end{proof}

 \subsection{Spread}
 \label{ssvois}
 
    \begin{lemma}[Voisin \cite{V0}, \cite{V1}]\label{projbundle} Let $M$ be a smooth projective variety of dimension $n+1$, and $L$ a very ample line bundle on $M$. Let 
    \[ \pi\colon \XX\to B\]
    denote a family of hypersurfaces, where $B\subset\vert L\vert$ is a Zariski open.
      Let
   \[   p\colon \wt{\XX\times_B \XX}\ \to\ \XX\times_B \XX\]
   denote the blow--up of the relative diagonal. 
 Then $\wt{\XX\times_B \XX}$ is Zariski open in $V$, where $V$ is a projective bundle over $\wt{M\times M}$, the blow--up of $M\times M$ along the diagonal.
   \end{lemma}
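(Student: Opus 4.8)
The statement is a standard ``spread'' construction, and the plan is to produce the projective bundle $V$ over $\wt{M\times M}$ explicitly and then identify $\wt{\XX\times_B\XX}$ as a Zariski open inside it. First I would set up notation: write $r:\wt{M\times M}\to M\times M$ for the blow-up of the diagonal $\Delta_M$, with exceptional divisor $E\cong\PP(T_M)$ (the projectivised tangent bundle). The point of blowing up is that a pair of points $(x,y)\in M\times M$ only imposes ``two conditions'' on a section $s\in H^0(M,L)$ when $x\ne y$; when $x=y$ the two conditions (vanishing at $x$ with multiplicity $2$, i.e. $s(x)=0$ and $ds(x)=0$ along the chosen tangent direction) degenerate, and passing to the blow-up resolves this so that the relevant evaluation map has constant rank. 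Concretely, on $M\times M$ consider the rank-two bundle of ``first-order data'' obtained from the two projections; over the complement of $\Delta_M$ it is $\mathrm{pr}_1^\ast L\oplus\mathrm{pr}_2^\ast L$, and I would check that on $\wt{M\times M}$ there is a rank-two bundle $\mathcal{Q}$ together with a surjection of the trivial bundle $H^0(M,L)\otimes\OO_{\wt{M\times M}}\twoheadrightarrow \mathcal{Q}$, restricting away from $E$ to the evaluation $s\mapsto(s(x),s(y))$ and along $E$ to $s\mapsto(s(x),ds(x)\vert_{v})$ for $v$ the tangent direction parametrised by the $\PP(T_M)$-coordinate.

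The second step is to define $V$ as the projectivisation of the kernel of this surjection: let $\mathcal{K}:=\ker\bigl(H^0(M,L)\otimes\OO_{\wt{M\times M}}\to\mathcal{Q}\bigr)$, a vector bundle of rank $h^0(M,L)-2$ on $\wt{M\times M}$ (rank is constant precisely because we blew up), and set $V:=\PP(\mathcal{K})$, a projective bundle over $\wt{M\times M}$. A point of $V$ is a pair consisting of a point of $\wt{M\times M}$ and a line in $H^0(M,L)$, i.e. (generically) a point $(x,y)$ together with a hypersurface $X_s=\{s=0\}$ in the fixed linear system passing through both $x$ and $y$. Restricting to the locus lying over $B\subset|L|$ — that is, over those $s$ for which $X_s$ is smooth and belongs to our family — gives exactly the data $\bigl((x,y),s\bigr)$ with $x,y\in X_s$, which is $\XX\times_B\XX$ away from the diagonal, and along $E$ gives the blown-up relative diagonal. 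Hence $\wt{\XX\times_B\XX}$ is the preimage in $V$ of the Zariski open $B\times(\text{smoothness locus})$, and in particular is Zariski open in $V$. I would then remark that $M=\PP^5$, $L=\OO(3)$ is the case relevant to the paper.

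The step requiring the most care — and the only genuine point of the argument — is verifying that the evaluation map $H^0(M,L)\otimes\OO_{\wt{M\times M}}\to\mathcal{Q}$ is surjective, equivalently that $\mathcal{K}$ is a bundle of the expected constant rank on all of $\wt{M\times M}$, including over $E$. Away from $\Delta_M$ this is just the statement that $L$ separates points, which holds since $L$ is very ample. Over a point of $E$ sitting above $x\in M$, surjectivity amounts to: for every nonzero tangent direction $v\in T_{M,x}$ there is a section $s$ with $s(x)=0$ but $ds(x)(v)\ne0$ — this is exactly the statement that $L$ separates tangent vectors, again guaranteed by very ampleness of $L$ (one can reduce to $L=\OO(1)$ on projective space via the embedding, or invoke $1$-jet ampleness directly). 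Once constant rank is established, the projective bundle $V$ is well-defined and the rest is bookkeeping; for full details I would simply refer to \cite{V0}, \cite{V1}, where this spread construction is carried out.
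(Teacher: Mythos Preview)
Your proposal is correct and follows essentially the same construction as the paper. The paper's proof is a brief sketch that defines $V$ as the incidence variety $\{((x,y,z),\sigma)\mid \sigma\vert_z=0\}\subset \wt{M\times M}\times\vert L\vert$, where $z$ is the length--$2$ subscheme parametrised by a point of $\wt{M\times M}$, and notes that very ampleness makes $V\to\wt{M\times M}$ a projective bundle; your kernel--bundle description $V=\PP(\mathcal{K})$ is the same object, and your verification that the evaluation map is surjective (via $L$ separating points and tangent vectors) is exactly the content of the paper's one--line appeal to very ampleness.
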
 
  
  \begin{proof} This is \cite[Proof of Proposition 3.13]{V0} or \cite[Lemma 1.3]{V1}. The idea is to define $V$ as
   \[  V:=\Bigl\{ \bigl((x,y,z),\sigma\bigr) \ \vert\ \sigma\vert_z=0\Bigr\}\ \ \subset\ \wt{M\times M}\times \vert L\vert\ .\]
   The very ampleness assumption ensures $V\to\wt{M\times M}$ is a projective bundle.
    \end{proof}

  This is used in the following key proposition: 
   
     \begin{proposition}[Voisin \cite{V1}]\label{voisin1} Assumptions as in lemma \ref{projbundle}. Assume moreover $M$ has trivial Chow groups. Let $R\in A^n(V)_{}$. Suppose that for all $b\in B$ one has
    \[ H^n(X_b)_{prim}\not=0\ \ \ \ 
  \hbox{and}\ \ \ \ 
     R\vert_{\wt{X_b\times X_b}}=0\ \ \in H^{2n}(\wt{X_b\times X_b})\ .\]
   Then there exists $\gamma\in A^n(M\times M)_{}$ such that
    \[     (p_b)_\ast \bigl(R\vert_{\wt{X_b\times X_b}}\bigr)= \gamma\vert_{X_b\times X_b}  \ \ \in A^{n}({X_b\times X_b})_{}\]  
    for all $b\in B$. 
   (Here $p_b$ denotes the restriction of $p$ to $\wt{X_b\times X_b}$, which is the blow--up of $X_b\times X_b$ along the diagonal.)
    \end{proposition}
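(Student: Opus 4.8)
The plan is to use the projective bundle structure of Lemma~\ref{projbundle} to see that the restriction of $R$ to a single fibre $\wt{X_b\times X_b}$ is already the restriction of a class living on the blow-up $\wt{M\times M}$, and then to pin down the sought-for $\gamma$ by a pigeonhole argument in $H^{2n}(X_b\times X_b)$. We first record that trivial Chow groups propagate: since $M$ has trivial Chow groups, so does $M\times M$; the blow-up centre $\Delta_M\cong M$ also has trivial Chow groups, so by the blow-up formula $N:=\wt{M\times M}$ has trivial Chow groups, and then by the projective bundle formula so does $V$ --- this is the ambient variety inside which $R$ is assumed to live.

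\emph{Reduction to $N$.} Write $\rho\colon V\to N$ for the projective bundle and $\beta\colon N\to M\times M$ for the blow-down. By the construction in Lemma~\ref{projbundle}, $V\subset N\times\vert L\vert$, and the fibre of $\operatorname{pr}_2\colon V\to\vert L\vert$ over $b$ equals $\wt{X_b\times X_b}$, sitting inside $N$ as the strict transform of $X_b\times X_b$; under this identification $p_b$ is the restriction of $\beta$. As relative hyperplane class take $\xi:=c_1\bigl(\operatorname{pr}_2^\ast\OO_{\vert L\vert}(1)\vert_V\bigr)$; it differs from the tautological class by a class pulled back from $N$, so the projective bundle formula still holds with it, and it restricts to $0$ on each fibre $\wt{X_b\times X_b}$ since $\operatorname{pr}_2$ is constant there. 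Writing $R=\sum_i\rho^\ast(\alpha_i)\cdot\xi^i$ with $\alpha_i\in A^{n-i}(N)$ and restricting, only the $i=0$ term survives:
\[ R\vert_{\wt{X_b\times X_b}}=\alpha_0\vert_{\wt{X_b\times X_b}}\ \ \text{in}\ A^n(\wt{X_b\times X_b}),\qquad \alpha_0\in A^n(N)\ \text{independent of}\ b. \]

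\emph{The candidate $\gamma$ and the error term.} Put $\gamma:=\beta_\ast\alpha_0\in A^n(M\times M)$ and write $f\colon X_b\times X_b\hookrightarrow M\times M$. Since $\beta$ and $p_b$ are isomorphisms over the complements of their exceptional loci, the cycle $(p_b)_\ast\bigl(R\vert_{\wt{X_b\times X_b}}\bigr)-\gamma\vert_{X_b\times X_b}$ restricts to zero on $X_b\times X_b\setminus\Delta_{X_b}$; by the localisation sequence together with $A_n(\Delta_{X_b})=\QQ$, it is therefore a rational multiple $c_b\,\Delta_{X_b}$ of the diagonal. Now both hypotheses enter. As $R\vert_{\wt{X_b\times X_b}}$ is homologically trivial, so is its pushforward, whence $f^\ast[\gamma]+c_b[\Delta_{X_b}]=0$ in $H^{2n}(X_b\times X_b)$. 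Project this identity onto the Künneth summand $H^n(X_b)_{prim}\otimes H^n(X_b)_{prim}$: using the orthogonal (for the cup product) decomposition $H^n(X_b)=i^\ast H^n(M)\oplus H^n(X_b)_{prim}$ for $i\colon X_b\hookrightarrow M$, the class $f^\ast[\gamma]$ --- being restricted from $M\times M$ --- has vanishing component there, whereas the component of $[\Delta_{X_b}]$ is the diagonal class of $H^n(X_b)_{prim}$, which is nonzero precisely because $H^n(X_b)_{prim}\neq 0$. Hence $c_b=0$, i.e. $(p_b)_\ast\bigl(R\vert_{\wt{X_b\times X_b}}\bigr)=\gamma\vert_{X_b\times X_b}$ for all $b\in B$.

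I expect the last step to be the main obstacle. It rests on two cohomological facts about the smooth hypersurface $X_b\subset M$: that the restriction map $H^{2n}(M\times M)\to H^{2n}(X_b\times X_b)$ misses the summand $H^n(X_b)_{prim}\otimes H^n(X_b)_{prim}$ (a consequence of the Lefschetz splitting of $H^n(X_b)$ and the fact that it is orthogonal for the cup product pairing), and that $[\Delta_{X_b}]$ genuinely meets this summand. The reduction step also hides a geometric verification --- identifying the fibre of $V$ over $b$ with the strict transform of $X_b\times X_b$ in $N$, and checking that $\xi$ restricts to $0$ along it --- which is elementary but must be carried out with some care.
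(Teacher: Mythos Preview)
Your argument is correct. The paper itself does not supply a proof of this proposition: it simply records ``This is \cite[Proposition 1.6]{V1}'', so there is no in-paper argument to compare against. What you have written is essentially a reconstruction of Voisin's proof: use the projective bundle formula for $V\to N=\wt{M\times M}$ with the relative hyperplane class coming from $\vert L\vert$ (which restricts to zero on each fibre $\wt{X_b\times X_b}$) to reduce $R\vert_{\wt{X_b\times X_b}}$ to the restriction of a fixed $\alpha_0\in A^n(N)$; push down by $\beta$ to obtain $\gamma$; observe via the localisation sequence that the discrepancy on each $X_b\times X_b$ is a multiple $c_b\Delta_{X_b}$; and kill $c_b$ by looking at the K\"unneth component in $H^n(X_b)_{prim}\otimes H^n(X_b)_{prim}$, using that classes restricted from $M\times M$ land in $i^\ast H^n(M)\otimes i^\ast H^n(M)$ while the diagonal has a nonzero component there precisely when $H^n(X_b)_{prim}\neq 0$.

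Two small remarks on your write-up. First, your chosen class $\xi=c_1\bigl(\operatorname{pr}_2^\ast\OO_{\vert L\vert}(1)\vert_V\bigr)$ is in fact \emph{equal} to the tautological class $c_1(\OO_V(1))$ (since $V=\PP(\EE)$ for a subbundle $\EE\subset H^0(M,L)\otimes\OO_N$, and $\OO_V(1)$ is the restriction of $\operatorname{pr}_2^\ast\OO_{\vert L\vert}(1)$ from $N\times\vert L\vert$), so no correction term is needed. Second, the orthogonality of the decomposition $H^n(X_b)=i^\ast H^n(M)\oplus H^n(X_b)_{prim}$ follows from the projection formula $\langle i^\ast a,b\rangle_{X_b}=\langle a,i_\ast b\rangle_M$ together with $H^n(X_b)_{prim}=\ker(i_\ast)$, and nondegeneracy of the pairing on $i^\ast H^n(M)$ is Hard Lefschetz on $M$; you might want to make this explicit rather than leave it to the reader.
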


\begin{proof} This is \cite[Proposition 1.6]{V1}.
\end{proof}

 The following is an equivariant version of proposition \ref{voisin1}:
 
  \begin{proposition}[Voisin \cite{V1}]\label{voisin2} Let $M$ and $L$ be as in proposition \ref{voisin1}. Let $G\subset\aut(M)$ be a finite group. Assume the following:
  
  \noindent
  (\rom1) The linear system $\vert L\vert^G:=\PP\bigl( H^0(M,L)^G\bigr)$ has no base--points, and the locus of points in $\wt{M\times M}$ parametrizing triples $(x,y,z)$ such that the length $2$ subscheme $z$ imposes only one condition on $\vert L\vert^G$ is contained in the union of (proper transforms of) graphs of non--trivial elements of $G$, plus some loci of codimension $>n+1$.
  
  \noindent
  (\rom2) Let $B\subset\vert L\vert^G$ be the open parametrizing smooth hypersurfaces, and let $X_b\subset M$ be a hypersurface for $b\in B$ general. There is no non--trivial relation
   \[ {\displaystyle\sum_{g\in G}} c_g \Gamma_g +\gamma=0\ \ \ \hbox{in}\ H^{2n}(X_b\times X_b)\ ,\]
   where $\gamma$ is a cycle in $\ima\bigl( A^n(M\times M)\to A^n(X_b\times X_b)\bigr)$.
   
   Let $R\in A^n(\XX\times_B \XX)$ be such that
     \[  R\vert_{{X_b\times X_b}}=0\ \ \in H^{2n}({X_b\times X_b})\ \ \ \forall b\in B\ .\]
    Then there exists $\gamma\in A^n(M\times M)_{}$ such that
    \[     R\vert_{{X_b\times X_b}}= \gamma\vert_{X_b\times X_b}  \ \ \in A^{n}({X_b\times X_b})\ \ \ \forall b\in B\ .\]  
  \end{proposition}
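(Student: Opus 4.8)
The plan is to carry out the argument of proposition \ref{voisin1} (i.e. \cite[Proposition 1.6]{V1}) $G$--equivariantly. The one genuinely new feature is that the invariant linear system $\vert L\vert^G$ need not separate points and tangent vectors, so the incidence variety it defines is only \emph{generically} a projective bundle over $\wt{M\times M}$. Mimicking lemma \ref{projbundle}, set
  \[ V:=\Bigl\{\ \bigl((x,y,z),\sigma\bigr)\ \vert\ \sigma\vert_z=0\ \Bigr\}\ \subset\ \wt{M\times M}\times\vert L\vert^G\ ,\]
with projections $\tau\colon V\to\wt{M\times M}$ and $\rho\colon V\to\vert L\vert^G$. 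Writing $d:=\dim\vert L\vert^G$, base--point--freeness of $\vert L\vert^G$ (hypothesis (\rom1)) makes $\tau$ surjective with fibres $\PP^{d-1}$ or $\PP^{d-2}$, and by (\rom1) the fibre is $\PP^{d-2}$ --- i.e. $\tau$ is a projective bundle --- away from $\bigcup_{g\not=e}\wt{\Gamma_g}$ and a closed subset of codimension $>n+1$ in $\wt{M\times M}$. The fibre of $\rho$ over $b\in B$ is the blow--up $\wt{X_b\times X_b}$ of the diagonal, so $\wt{\XX\times_B \XX}$ is a dense Zariski open in $V$ (and $\dim V=2n+d$).

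The crucial step is to compute $A^n(V)$. Since $M$ has trivial Chow groups, so do $M\times M$ and its blow--up $\wt{M\times M}$; hence over the open $U\subset V$ where $\tau$ is a projective bundle, $A^n(U)$ is spanned by classes $h^i\cdot\tau^\ast(\hbox{cycle class on }\wt{M\times M})$, $h:=\rho^\ast\OO_{\vert L\vert^G}(1)$, all of which extend to $V$. By the localisation sequence $A_{n+d}(V\setminus U)\to A_{n+d}(V)\to A_{n+d}(U)\to 0$ it remains to analyse $V\setminus U$: hypothesis (\rom1) guarantees, by a dimension count, that the part of $V\setminus U$ over the codimension--$(>n+1)$ locus has dimension $<n+d$ and so does not contribute to $A_{n+d}(V)=A^n(V)$, whereas over each $\wt{\Gamma_g}$ (of dimension $n+1$) the $\tau$--fibre is $\PP^{d-1}$, contributing precisely one $(n+d)$--dimensional component $\wt{V_g}:=\tau^{-1}(\wt{\Gamma_g})$. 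Therefore
  \[ A^n(V)\ =\ \bigl\langle\,h^i\cdot\tau^\ast(\hbox{cycle classes on }\wt{M\times M})\,\bigr\rangle\ +\ \sum_{g\not=e}\QQ\,[\wt{V_g}]\ .\]

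Next I would pull $R$ back along the blow--up $p\colon\wt{\XX\times_B \XX}\to\XX\times_B \XX$, extend $p^\ast R$ to some $\bar R\in A^n(V)$ (restriction to the open $\wt{\XX\times_B \XX}$ is surjective by localisation), and write $\bar R=\beta+\sum_{g\not=e}c_g[\wt{V_g}]$ with $\beta$ a sum of ``bundle classes'' $h^i\tau^\ast(-)$. Restricting this identity to the fibre $\wt{X_b\times X_b}=\rho^{-1}(b)$ for general $b\in B$: each $h^i$ with $i\geq1$ restricts to $0$ (a point of $\vert L\vert^G$), so $\beta$ restricts to the pullback to $\wt{X_b\times X_b}$ of a fixed class $\alpha_0\in A^n(\wt{M\times M})$; pushing forward by $p_b$ and using the blow--up formula, $\alpha_0$ contributes $\gamma_0\vert_{X_b\times X_b}+c_e\,[\Delta_{X_b}]$ for a fixed $\gamma_0\in A^n(M\times M)$ and a fixed $c_e\in\QQ$ (only the top exceptional term of $\alpha_0$ survives the pushforward). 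Each $[\wt{V_g}]$ restricts, by $G$--equivariance and generic transversality, to the proper transform of $\Gamma_{g\vert_{X_b}}$, which $p_b$ pushes to $\Gamma_g$. Altogether, for all $b\in B$,
  \[ R\vert_{X_b\times X_b}\ =\ \gamma_0\vert_{X_b\times X_b}+\sum_{g\in G}c_g\,\Gamma_g\ \ \ \hbox{in}\ A^n(X_b\times X_b)\ ,\]
where the $c_g$ are \emph{independent of $b$}. Feeding in $R\vert_{X_b\times X_b}=0$ in $H^{2n}(X_b\times X_b)$ and invoking hypothesis (\rom2) forces $c_g=0$ for every $g\in G$; since the $c_g$ do not depend on $b$, we obtain $R\vert_{X_b\times X_b}=\gamma_0\vert_{X_b\times X_b}$ for \emph{every} $b\in B$, and $\gamma:=\gamma_0$ does the job.

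I expect the main obstacle to be the Chow--group computation of $V$ in the second step: in contrast to the base--point--free situation of proposition \ref{voisin1}, here $V$ is only generically a projective bundle, and one must show --- via the localisation sequence together with the dimension estimate above --- that the degenerate strata lying over the codimension--$(>n+1)$ locus genuinely contribute nothing in codimension $n$. This is exactly what the codimension bound in hypothesis (\rom1) is designed to ensure. The remaining bookkeeping (the blow--up of the relative diagonal, the transversality and excess--intersection statements implicit in (\rom1), and the exceptional divisor producing the term $[\Delta_{X_b}]=\Gamma_e$ above) is fiddly but routine, and already appears, in non--equivariant form, in Voisin's proof of proposition \ref{voisin1}.
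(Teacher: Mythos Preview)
Your proposal is correct and follows the same overall strategy as the paper: construct the incidence variety $V\subset\wt{M\times M}\times\vert L\vert^G$, analyse $A^n(V)$, restrict to fibres to obtain a decomposition
\[ R\vert_{X_b\times X_b}=\gamma_0\vert_{X_b\times X_b}+\sum_{g\in G}\lambda_g\,\Gamma_g\ \ \hbox{in}\ A^n(X_b\times X_b)\]
with the $\lambda_g$ independent of $b$, and then kill the $\lambda_g$ using hypothesis (\rom2).

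The only difference is in how $A^n(V)$ is controlled. The paper (following Voisin) blows up $\wt{M\times M}$ along the proper transforms $\wt{\Gamma_g}$ and the codimension~$>n+1$ loci to obtain $M^\prime$, so that the base--changed $V^\prime\to M^\prime$ becomes an honest projective bundle; the projective bundle formula for $V^\prime$ then yields the decomposition, with the new exceptional divisors over the $\wt{\Gamma_g}$ producing the $\Gamma_g$ terms. You instead work directly on $V$ via the localisation sequence, using the dimension bound from (\rom1) to see that the degenerate fibres over the codimension~$>n+1$ locus contribute nothing to $A_{n+d}(V)$, while the $\PP^{d-1}$--fibres over each $\wt{\Gamma_g}$ contribute exactly $\QQ\cdot[\wt{V_g}]$. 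The two computations are equivalent and lead to the same formula; your route avoids the auxiliary blow--up at the cost of having to handle the excess/transversality bookkeeping (restricting $[\wt{V_g}]$ and the exceptional part of $\alpha_0$ to the fibre) by hand, which you correctly flag as routine.
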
 
  
 \begin{proof} This is not stated verbatim in \cite{V1}, but it is contained in the proof of \cite[Proposition 3.1 and Theorem 3.3]{V1}. We briefly review the argument.
 One considers
   \[  V:=\Bigl\{ \bigl((x,y,z),\sigma\bigr) \ \vert\ \sigma\vert_z=0\Bigr\}\ \ \subset\ \wt{M\times M}\times \vert L\vert^G\ .\]   
   The problem is that this is no longer a projective bundle over $\wt{M\times M}$. However, as explained in the proof of \cite[Theorem 3.3]{V1}, hypothesis (\rom1) ensures that one 
   can obtain a projective bundle after blowing up the graphs $\Gamma_g, g\in G$ plus some loci of codimension $>n+1$. Let $M^\prime\to\wt{M\times M}$ denote the result of these blow--ups, and let $V^\prime\to M^\prime$ denote the projective bundle obtained by base--changing. 

Analyzing the situation as in \cite[Proof of Theorem 3.3]{V1}, one obtains
   \[ R\vert_{X_b\times X_b} =R_0\vert_{X_b\times X_b}+ {\displaystyle\sum_{g\in G}} \lambda_g \Gamma_g\ \ \ \hbox{in}\ A^n(X_b\times X_b) \ ,\]
   where $R_0\in A^n(M\times M)$ and $\lambda_g\in\QQ$ (this is \cite[Equation (15)]{V1}).
   By assumption, $R\vert_{X_b\times X_b}$ is homologically trivial. Using hypothesis (\rom2), this implies that all $\lambda_g$ have to be $0$.   
     \end{proof}

\section{Main result}

This section contains the proof of the main result of this note, theorem \ref{main}. The proof is split in two parts. In the first part, we prove a statement (theorem \ref{main2}) about the action of the involution on $1$--cycles on the cubic $Y$. The proof is based on the technique of ``spread'' of cycles in a family, as developed by Voisin \cite{V0}, \cite{V1}, \cite{V8}, \cite{Vo} (more precisely, the results recalled in subsection \ref{ssvois}).

In the second part, we deduce from this our main result, theorem \ref{main}. This second part builds on the structural results of Shen--Vial \cite{SV} (notably the results recalled in subsections \ref{ss1} and \ref{ss2}).
     
   \subsection{First part} 
   
   \begin{theorem}\label{main2}  Let $Y\subset\PP^5(\C)$ be a smooth cubic fourfold defined by an equation
    \[ (X_0)^2 \ell(X_1,\ldots,X_5)+ g(X_1,\ldots,X_5)=0\ .\]
    Let $\iota_Y\in\aut(Y)$ be the involution of lemma \ref{inv}. Then
    \[  (\iota_Y)^\ast = -\ide\colon\ \ \ A^3_{hom}(Y)\ \to\ A^3(Y)\ .\]
    \end{theorem}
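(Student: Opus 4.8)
The plan is to prove Theorem \ref{main2} by the spread method, applying the equivariant Proposition \ref{voisin2} to the universal family of cubic fourfolds of the special form $(X_0)^2\ell + g = 0$. First I would set up the geometric framework: take $M = \PP^5(\C)$, $L = \OO_{\PP^5}(3)$, and $G = \{\ide, \iota_\PP\} \cong \ZZ/2\ZZ$, so that $\vert L\vert^G$ is precisely the linear system of cubics invariant under $\iota_\PP$, i.e. those of the stated form (by Lemma \ref{inv}). Note $M$ has trivial Chow groups, so the hypotheses of Proposition \ref{voisin2} are of the right shape. The cubic threefolds here have dimension $4$, so in the notation of the propositions $n = 4$. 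The key input to manufacture is a cycle $R \in A^4(\XX\times_B\XX)$ that is fibrewise homologically trivial and that ``computes'' the class $\Gamma_{\iota_Y} + {}^t\Gamma_{\iota_Y}$ modulo the diagonal-type corrections; concretely, one wants to feed into the machinery the correspondence governing the action of $\iota_Y$ on primitive cohomology. Since $\iota_Y$ acts as $-\ide$ on $H^4(Y)_{prim}$ (this is the content of the Hodge-theoretic computation recalled in the proof of Lemma \ref{inv}, via $H^2(F(Y))\cong H^4(Y)$), the cycle
\[ R := \Gamma_{\iota_Y} + \Delta_Y - 2\,(\text{projector onto the non-primitive part}) \]
or rather the appropriately normalized combination $\Gamma_{\iota_Y} + \Delta_Y$ minus a multiple of $h^2\times h^2$-type classes coming from $M\times M$, is fibrewise homologically trivial on $H^8(Y\times Y)$. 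This is because on $H^4(Y)_{prim}$ the operator $\iota_Y^\ast + \ide$ vanishes, while the complementary summand $H^{\ast\neq 4} \oplus (\text{non-primitive } H^4)$ is spanned by restrictions of cycles on $\PP^5\times\PP^5$.

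Next I would verify hypotheses (\rom1) and (\rom2) of Proposition \ref{voisin2}. For (\rom1): I need that the linear system $\vert\OO_{\PP^5}(3)\vert^G$ is base-point free — which is clear since e.g. the monomials $X_1^3, \ldots, X_5^3, X_0^2 X_1$ are all invariant and have no common zero — and that the locus in $\wt{\PP^5\times\PP^5}$ where a length-$2$ subscheme $z$ imposes only one condition on $\vert L\vert^G$ is contained in (proper transforms of) the graph of $\iota_\PP$ plus loci of codimension $> 5$. The graph $\Gamma_{\iota_\PP}$ is where $z = \{p, \iota_\PP(p)\}$, and these two points are identified by every invariant section, so they impose only one condition there; the point is to check no other large locus occurs, which should follow from the fact that away from $\Gamma_{\iota_\PP}$ the $G$-orbit structure is generic enough — I would estimate dimensions of the exceptional loci where $p$ is a fixed point of $\iota_\PP$ (the hyperplane $X_0 = 0$ together with the point $[1:0:\cdots:0]$) and confirm these are small. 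For (\rom2): I need that for general invariant $Y$ there is no nontrivial relation $c_1\Delta_Y + c_2\Gamma_{\iota_Y} + \gamma = 0$ in $H^8(Y\times Y)$ with $\gamma$ pulled back from $\PP^5\times\PP^5$; this follows because $\Delta_Y$ and $\Gamma_{\iota_Y}$ act on $H^4(Y)_{prim}$ as $\ide$ and $-\ide$ respectively while classes from $\PP^5\times\PP^5$ act as $0$ there, and $H^4(Y)_{prim}\neq 0$ — and moreover the Hodge structure on $H^4(Y)_{prim}$ is simple (or at least has no rank-one sub) for very general invariant $Y$, so $\ide$ and $-\ide$ are independent as self-correspondences.

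With the hypotheses in place, Proposition \ref{voisin2} yields a cycle $\gamma \in A^4(\PP^5\times\PP^5)$ with $R\vert_{Y\times Y} = \gamma\vert_{Y\times Y}$ in $A^4(Y\times Y)$ for all $b\in B$. Unravelling the definition of $R$, this gives
\[ \Gamma_{\iota_Y} + \Delta_Y = \gamma' \ \ \in A^4(Y\times Y) \]
for some cycle $\gamma'$ coming from $\PP^5\times\PP^5$ (after absorbing the correction terms). Now I would let both sides act on $A^3_{hom}(Y)$: since $A^\ast(\PP^5\times\PP^5)$ is generated by products $h^i\times h^j$, any cycle $\gamma'$ from $\PP^5\times\PP^5$ acts on $A^3(Y)$ through $A^\ast(\text{point})$-type factors and hence kills $A^3_{hom}(Y)$ (the relevant components factor through $H^\ast$ of a point or through restriction maps annihilating homologically trivial cycles). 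Therefore $(\Gamma_{\iota_Y})_\ast + \ide = 0$ on $A^3_{hom}(Y)$, i.e. $(\iota_Y)^\ast = -\ide$ there, which is the assertion of Theorem \ref{main2}.

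The main obstacle I anticipate is the verification of hypothesis (\rom1) of Proposition \ref{voisin2}: one must carefully control all loci in $\wt{\PP^5\times\PP^5}$ where a length-$2$ subscheme fails to impose two independent conditions on the $G$-invariant cubics, and show the ``bad'' part beyond $\Gamma_{\iota_\PP}$ has codimension $> 5$. This requires a genuine, if routine, dimension count involving the fixed locus of $\iota_\PP$ (the hyperplane $X_0 = 0$ and the isolated fixed point) and the behaviour of the linear subseries $\langle X_1^3,\ldots,X_5^3,\ X_0^2 X_1,\ldots\rangle$; a secondary technical point is making the passage from the cohomological vanishing of $\iota_Y^\ast + \ide$ on $H^4(Y)_{prim}$ into a clean choice of the cycle $R$ with the corrections genuinely supported on $M\times M$, which relies on the Lefschetz-type description of the non-primitive cohomology of $Y$ and is standard but must be spelled out.
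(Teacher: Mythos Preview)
Your overall strategy---use Proposition \ref{voisin2} on the family of $G$--invariant cubics with $G=\{\ide,\iota_\PP\}$---is exactly the paper's, but there is a genuine gap at the Hodge--theoretic step, and it propagates to both the construction of $R$ and the verification of hypothesis (\rom2).

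\medskip

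\textbf{The error.} You assert that $\iota_Y^\ast=-\ide$ on $H^4(Y)_{prim}$, citing Lemma \ref{inv}. That lemma only gives $\iota_Y^\ast=-\ide$ on $H^{3,1}(Y)$, hence on the transcendental lattice $H^4_{tr}(Y)$. It is \emph{false} that $\iota_Y^\ast=-\ide$ on all of $H^4(Y)_{prim}$: by \cite[3.6]{Beau4} the trace of $\iota^\ast$ on $H^{1,1}(X)$ is $-7$, so via Beauville--Donagi there is a $6$--dimensional $+1$--eigenspace inside $H^{2,2}(Y)_{prim}$. Two consequences:

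\smallskip

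\emph{(a) The correction cycle is not ambient.} The class $\Gamma_{\iota_Y}+\Delta_Y$ acts as $2\cdot\ide$ on this $+1$--eigenspace of $H^{2,2}(Y)_{prim}$, and primitive classes are invisible to anything restricted from $A^4(\PP^5\times\PP^5)$. So no choice of ``$h^2\times h^2$--type'' correction makes $R$ fibrewise homologically trivial. The paper fixes this by taking $\gamma$ to be a \emph{completely decomposed} cycle supported on $\cup_i V_i\times W_i$ with $\dim V_i+\dim W_i=4$, where for $i=2$ the surfaces $V_2,W_2\subset Y$ represent classes in $N^2H^4(Y)$ (not just $h^2$). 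Spreading such a $\gamma$ over the base requires an extra input, namely \cite[Proposition 3.7]{V0}; once spread, $\gamma$ still acts trivially on $A^3_{hom}$ by a dimension argument, not because it comes from $\PP^5\times\PP^5$.

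\smallskip

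\emph{(b) Your check of {\rm (\rom2)} is circular.} If $\iota_Y^\ast$ really were $-\ide$ on $H^4(Y)_{prim}$, then $\Delta_Y+\Gamma_{\iota_Y}$ would be cohomologous to a class from $\PP^5\times\PP^5$, which is precisely a nontrivial relation of the forbidden form---so (\rom2) would \emph{fail}. (Your sentence ``$\ide$ and $-\ide$ are independent as self--correspondences'' is not right: looking at the action on $H^{3,1}$ only forces $c_1=c_2$, it does not force $c_1=c_2=0$.) The paper's verification of (\rom2) goes the other way: assuming a relation, one deduces $\iota_Y^\ast=-\ide$ on $H^{2,2}(Y)_{prim}$, hence trace $-19$, contradicting the actual trace $-7$.

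\medskip

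Two smaller points. For hypothesis (\rom1) the paper avoids a bare dimension count by passing to the quotient $\PP^5\to\PP(2,1^5)$ and invoking Delorme's very--ampleness result for $\OO_{\PP(2,1^5)}(2)$; your proposed count may be workable but is not what is done. Finally, Proposition \ref{voisin2} yields the conclusion only for \emph{general} $b\in B$ (because the decomposed $\gamma$ restricts well only generically); the extension to all $b$ is a separate specialization argument, handled in the paper via \cite[Lemma 3.1]{LFu2}.
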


  \begin{proof} We have seen (proof of lemma \ref{inv}) that
    \[ (\iota_Y)^\ast =-\ide\colon\ \ \ H^{3,1}(Y)\ \to\ H^{3,1}(Y)\ .\]
  Let $H^4_{tr}(Y)$ denote the orthogonal complement (under the cup--product pairing) of $N^2 H^4(Y)$ (which coincides with $H^{2,2}(Y,\QQ)$ since the Hodge conjecture is true for $Y$). Since $H^4_{tr}(Y)\subset H^4(Y)$ is the smallest Hodge substructure containing $H^{3,1}(Y)$, we must also have
   \begin{equation}\label{van} (\iota_Y)^\ast =-\ide\colon\ \ \ H^{4}_{tr}(Y)\ \to\ H^{4}_{tr}(Y)\ .\end{equation}
   This implies that there is a decomposition
   \begin{equation}\label{fibrewise} {}^t \Gamma_{\iota_Y} = -\Delta_Y +\gamma\ \ \ \hbox{in}\ H^8(Y\times Y)\ ,\end{equation}
   where $\gamma\in A^4(Y\times Y)$ is a ``completely decomposed'' cycle, i.e.
    \[ \gamma=\gamma_0 + \gamma_2 +\gamma_4 +\gamma_6 +\gamma_8\ ,\]
    and $\gamma_{2i}$ has support on $V_i\times W_i\subset Y\times Y$ with $\dim V_i=i$ and $\dim W_i=4-i$.
    (Indeed, the cycle $\gamma$ is obtained by considering
      \[ \gamma_{2i}:= ({}^t \Gamma_{\iota_Y} +\Delta_Y)\circ \pi_{2i}\ \ \ \in H^8(Y\times Y)\ ,\]
      where $\pi_i$ denotes the K\"unneth component.
    For $i\not=4$, the claimed support condition is obviously satisfied since it is satisfied by $\pi_i$. For $i=4$, one uses (\ref{van}) to see that $\gamma_4$ is supported on $N^2 H^4(Y)\otimes N^2 H^4(Y)\subset H^8(Y\times Y)$.)
    
  We now consider things family--wise. Let 
    \[ \YY\ \to\ B \]
    denote the universal family of all smooth cubic fourfolds defined by an equation where $X_0$ occurs only in even degree. Let $Y_b\subset\PP^5(\C)$
    denote the fibre over $b\in B$.
    
    The involution $\iota_\PP$ defines by restriction an involution $\iota_\YY\in\aut(\YY)$. Let $\Delta_\YY\in A^4(\YY\times_B \YY)$ denote the relative diagonal. Obviously the argument leading to the decomposition (\ref{fibrewise}) applies to each fibre $Y_b$. This means that for each $b\in B$, there exists a completely decomposed cycle $\gamma_b\in A^4(Y_b\times Y_b)$ such that
    \[  \bigl({}^t \Gamma_{\iota_\YY} +\Delta_\YY\bigr)\vert_{Y_b\times Y_b} =\gamma_b\ \ \ \hbox{in}\ H^8(  Y_b\times Y_b)\ .\]
    Applying the ``spread'' result \cite[Proposition 3.7]{V0}, we can find a ``completely decomposed'' relative correspondence $\gamma\in A^4(\YY\times_B \YY)$
    such that
    \[  \bigl({}^t \Gamma_{\iota_\YY} +\Delta_\YY-\gamma\bigr)\vert_{Y_b\times Y_b} =0\ \ \ \hbox{in}\ H^8(  Y_b\times Y_b)\ \ \ \forall b\in B\ .\]
    (By this, we mean the following: there exist subvarieties $\VV_i, \WW_i\subset \YY$ for $i=0,2,4,6,8$ with 
      \[\codim \VV_i +\codim \WW_i=4\ ,\] 
      and such that the cycle $\gamma$ is
    supported on
      \[ \cup_i \VV_i\times_B \WW_i\ \ \ \subset \YY\times_B \YY\ .\]
      Actually, for $i\not=4$ this is obvious since the $\pi_i, i\not=4$ obviously exist relatively. The recourse to \cite[Proposition 3.7]{V0} can thus be limited to $i=4$.)
      
   That is, the relative correspondence
    \[ \Gamma:=    {}^t \Gamma_{\iota_\YY} +\Delta_\YY-\gamma \ \ \ \in A^4(\YY\times_B \YY) \]
    is fibrewise homologically trivial:
    \[ \Gamma\vert_{Y_b\times Y_b}=0\ \ \ \hbox{in}\ H^8(Y_b\times Y_b)\ \ \ \forall b\in B\ .\]
  
  At this point, we note that the family $\YY\to B$ is large enough to verify the hypotheses of proposition \ref{voisin2}; this will be proven in lemma \ref{ok} below. Applying proposition \ref{voisin2} to the relative correspondence $\Gamma$, we find that there exists $\delta\in A^4(\PP^5\times \PP^5)$ such that
   \[  \Gamma\vert_{Y_b\times Y_b} +\delta\vert_{Y_b\times Y_b} =0\ \ \ \hbox{in}\ A^4(Y_b\times Y_b)\ \ \ \forall b\in B\ .\]       
   But  
    \[   (\delta\vert_{Y_b\times Y_b})_\ast =0\colon\ \ \ A^3_{hom}(Y_b)\ \to\ A^3(Y_b)\ \ \ \forall b\in B \]
    (indeed, the action factors over $A^4_{hom}(\PP^5)$ which is $0$). Also, we have
    \[   (\gamma\vert_{Y_b\times Y_b})_\ast =0\colon\ \ \ A^3_{hom}(Y_b)\ \to\ A^3(Y_b)\ \ \ \hbox{for\ general\ } b\in B \]
    (indeed, for general $b\in B$ the restriction $\gamma\vert_{Y_b\times Y_b}$ is a completely decomposed cycle; such cycles do not act on $A^3_{hom}$ for dimension reasons).
    
  By definition of $\Gamma$, this means that
  \[ \bigl(  {}^t \Gamma_{\iota_{Y_b}} +\Delta_{Y_b}\bigr){}_\ast=0\colon\ \ \ A^3_{hom}(Y_b)\ \to\ A^3(Y_b)\ \ \ \hbox{for\ general\ } b\in B\ . \]
  This proves theorem \ref{main2} for general $b\in B$. To extend to all $b\in B$, one can reason as in \cite[Lemma 3.1]{LFu2}.
  
  It only remains to check the hypotheses of Voisin's result are satisfied:
  
  \begin{lemma}\label{ok} Let $\XX\to B$ be the family of smooth cubic fourfolds as in theorem \ref{main}, i.e.
    \[ B\ \subset\ \Bigl(\PP H^0\bigl(\PP^5,\OO_{\PP^5}(3)\bigr)\Bigr)^G \]
    is the open subset parametrizing smooth $G$--invariant cubics, and $G=\{id,\iota_\PP\}\subset\aut(\PP^5)$ as above. This set--up verifies the hypotheses of proposition \ref{voisin2}.  
  \end{lemma}
  
 \begin{proof} 
 Let us first prove hypothesis (\rom1) of proposition \ref{voisin2} is satisfied. 
  
  To this end, we consider the quotient morphism
   \[ p\colon\ \ \PP^5\ \to\ P:=\PP(2,1^5)=\PP^5/G\ ,\]
   where $\PP(2,1^5)$ denotes a weighted projective space. 
      
   The sections in $\bigl(\PP H^0\bigl(\PP^5,\OO_{\PP^5}(3)\bigr)\bigr)^G$ are in bijection with sections coming from $P$, i.e. there is an isomorphism
   \[  \Bigl(\PP H^0\bigl(\PP^5,\OO_{\PP^5}(3)\bigr)\Bigr)^G \ \cong\  \PP H^0\bigl( P,\OO_P(3)\bigr)\ .\]  
   
   Let us now assume $x,y\in\PP^5$ are two points such that
   \[ (x,y)\not\in \Delta_{\PP^5}\cup \Gamma_{\iota_\PP}\ .\]
   Then 
   \[ p(x)\not=p(y)\ \ \ \hbox{in}\ P\ ,\]
   and so (using lemma \ref{delorme} below) there exists $\sigma\in\PP H^0\bigl(P,\OO_{P}(3)\bigr)$ containing $p(x)$ but not $p(y)$. The pullback $p^\ast(\sigma)$ contains $x$ but not $y$, and so these points $(x,y)$ impose $2$ independent conditions on   $ \bigl(\PP H^0\bigl(\PP^5,\OO_{\PP^5}(3)\bigr)\bigr)^G$.

  To establish hypothesis (\rom2) of proposition \ref{voisin2}, we proceed by contradiction. Let us suppose hypothesis (\rom2) is not met with, i.e. there exists a smooth cubic $Y_b$ as in theorem \ref{main}, and a non--trivial relation
  \[  c\,\Delta_{Y_b} +d\, \Gamma_{\iota_{Y_b}} +\delta =0\ \ \ \hbox{in}\ H^8(Y_b\times Y_b)\ ,\]
  where $c,d\in \QQ^\ast$ and $\delta\in\ima\bigl( A^4(\PP^5\times\PP^5)\to A^4(Y_b\times Y_b)\bigr)$.
  Looking at the action on $H^{3,1}(Y_b)$, we find that necessarily $c=d$ (indeed, $\delta$ does not act on $H^{3,1}(Y_b)$, and $\iota_{Y_b}$ acts as minus the identity on $H^{3,1}(Y_b)$).  
   That is, we would have a relation
   \[ \Delta_{Y_b} +\Gamma_{\iota_{Y_b}} +{1\over c}\ \delta =0\ \ \ \hbox{in}\ H^8(Y_b\times Y_b)\ .\]
   Looking at the action on $H^{2,2}(Y_b)$, we find that
    \[ (\iota_{Y_b})^\ast =-\ide\colon\ \ \ \gr^2_F H^4(Y_b,\C)_{\rm prim}\ \to\   \gr^2_F H^4(Y_b,\C)_{\rm prim}\ .\]
    Since there is a codimension $2$ linear subspace in $\PP^5$ fixed by $\iota$, it follows that 
    \[   \hbox{trace}\bigl((\iota_{Y_b})^\ast\vert_{ \gr^2_F H^4(Y_b,\C)}\bigr)= 1-20=-19\ .\]
    Consider now the Fano variety of lines $X=F(Y_b)$ with the involution $\iota$. Using the Beauville--Donagi isomorphism \cite{BD}, one obtains that also
    \[ (\iota_{})^\ast =-\ide\colon\ \ \ \gr^1_F H^2(X,\C)_{prim}\ \to\   \gr^1_F H^2(X,\C)_{}\ ,\]
  and so the trace of $(\iota_{})^\ast$ on $ \gr^1_F H^2(X,\C)_{}$ would be $-19$.
    However, this contradicts the fact that (as noted in \cite[3.6]{Beau4}) the trace is actually $-7$, and so hypothesis (\rom2) must be satisfied.  
  
  \begin{lemma}\label{delorme} Let $P=\PP(2,1^5)$. Let $r,s\in P$ and $r\not=s$. Then there exists $\sigma\in\PP H^0\bigl(P,\OO_P(3)\bigr)$ containing $r$ but avoiding 
  $s$.
  \end{lemma}
  
  \begin{proof} It follows from Delorme's work \cite[Proposition 2.3(\rom3)]{Del} that the locally free sheaf $\OO_P(2)$ is very ample. This means there exists $\sigma^\prime\in\PP H^0\bigl(P,\OO_P(2)\bigr)$ containing $r$ but avoiding 
  $s$. Taking the union of $\sigma^\prime$ with a hyperplane avoiding $s$, one obtains $\sigma$ as required.
    \end{proof}

 \end{proof}

  \end{proof}

   \subsection{Second part}  
     
   \begin{theorem}\label{main}  Let $Y\subset\PP^5(\C)$ be a smooth cubic fourfold defined by an equation
    \[ (X_0)^2 \ell(X_1,\ldots,X_5)+ g(X_1,\ldots,X_5)=0\ .\]
 Let $X=F(Y)$ be the Fano variety of lines in $Y$. Let $\iota\in\aut(X)$ be the anti--symplectic involution of lemma \ref{inv}.
 Then
   \[  \begin{split}  \iota^\ast=-\ide\colon\ \ \ &A^i_{(2)}(X)\ \to\ A^i_{(2)}(X)\ \ \ \hbox{for}\ i=2,4\ ;\\
                            \iota^\ast=\ide\colon\ \ \ &A^4_{(j)}(X)\ \to\ A^4_{(j)}(X)\ \ \ \hbox{for}\ j=0,4\ .\\
                        \end{split}\]    
                  \end{theorem}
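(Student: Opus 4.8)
The plan is to deduce Theorem \ref{main} from Theorem \ref{main2} together with the structural results of Shen--Vial recalled in subsections \ref{ss1} and \ref{ss2}. The starting point is the Beauville--Donagi isomorphism \cite{BD}, which identifies the Hodge structure $H^4(Y)_{\mathrm{prim}}$ with $H^2(X)_{\mathrm{prim}}$ and, more to the point, induces an isomorphism of Chow groups $A^3_{hom}(Y) \cong A^2_{(2)}(X)$ compatible with the two involutions $\iota_Y$ and $\iota$. Since Theorem \ref{main2} says $(\iota_Y)^\ast = -\ide$ on $A^3_{hom}(Y)$, transporting along this isomorphism immediately gives $\iota^\ast = -\ide$ on $A^2_{(2)}(X)$, which is the case $i=2$ of the first line.

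Next I would bootstrap to the remaining pieces using the multiplicative structure of Theorem \ref{chowringfano}. For $i=4$ in the first line: by Theorem \ref{chowringfano}(\rom1) there is a class $\ell \in A^2_{(0)}(X)$ with $\cdot\ell \colon A^2_{(2)}(X) \xrightarrow{\cong} A^4_{(2)}(X)$. Since $\ell$ lies in $A^2_{(0)}(X)$ and comes (up to the known description) from the polarization class, it is $\iota$-invariant — one checks $\iota^\ast \ell = \ell$, using either that $\iota$ preserves the Plücker polarization or that $A^2_{(0)}(X)$ injects into $H^2(X)$ where $\iota^\ast$ acts as $+\ide$ on the invariant part containing $\ell$. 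Given $\iota^\ast$-invariance of $\ell$ and $\iota^\ast = -\ide$ on $A^2_{(2)}(X)$, the isomorphism $\cdot\ell$ is $\iota$-equivariant, so $\iota^\ast = -\ide$ on $A^4_{(2)}(X)$ as well. For the $j=4$ case: Theorem \ref{chowringfano}(\rom2) gives a surjection $A^2_{(2)}(X) \otimes A^2_{(2)}(X) \twoheadrightarrow A^4_{(4)}(X)$; since $\iota^\ast$ acts by $-\ide$ on each factor, it acts by $(-1)(-1) = +\ide$ on the product, hence by $+\ide$ on the image $A^4_{(4)}(X)$. Finally, $A^4_{(0)}(X)$ is one-dimensional, generated by the class of a point (or rather the canonical zero-cycle $\mathfrak{o}_X$ of Shen--Vial), and this class is manifestly $\iota$-invariant, giving $\iota^\ast = \ide$ on $A^4_{(0)}(X)$.

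The step I expect to require the most care is the transfer of Theorem \ref{main2} across the Beauville--Donagi correspondence at the level of Chow groups, rather than merely cohomology: one must check that the Beauville--Donagi cycle $P \in A^\ast(X \times Y)$ inducing $A^2_{(2)}(X) \xrightarrow{\cong} A^3_{hom}(Y)$ genuinely intertwines $\iota$ and $\iota_Y$ (i.e. $P \circ {}^t\Gamma_{\iota} = {}^t\Gamma_{\iota_Y} \circ P$, at least modulo correspondences that act trivially on the relevant pieces), and that its image/source is precisely $A^2_{(2)}(X)$ in the Shen--Vial grading. This compatibility is essentially the content of \cite[Section 7]{Cam} and of the Shen--Vial analysis of $F(Y)$, but it is the linchpin; the multiplicative arguments in the previous paragraph are then formal consequences of Theorems \ref{chowringfano} and \ref{fanomck}. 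A secondary point worth stating carefully is the $\iota$-invariance of the class $\ell$ in Theorem \ref{chowringfano}(\rom1): since this $\ell$ is characterized by its intersection-theoretic property rather than given explicitly, one should argue via its image in $H^2(X)$ (where it must be a multiple of the $\iota$-invariant polarization, the involution being anti-symplectic and hence acting as $+\ide$ on the relevant part of $H^{1,1}$) combined with the injectivity of $A^2_{(0)}(X) \hookrightarrow H^2(X)$ from Theorem \ref{fanomck}.
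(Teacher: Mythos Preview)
Your overall strategy coincides with the paper's: transfer Theorem \ref{main2} along the universal--line correspondence $P$ to obtain $\iota^\ast=-\ide$ on $A^2_{(2)}(X)$, then propagate to $A^4_{(2)}$, $A^4_{(4)}$, $A^4_{(0)}$ via Theorem \ref{chowringfano}. The paper establishes $A^2_{(2)}(X)=({}^tP)_\ast A^3_{hom}(Y)$ (only the surjection is needed) and checks the commutative square you describe by noting that the involution lifts to $P$; so that part of your outline is correct.

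The genuine gap is your justification of $\iota^\ast(\ell)=\ell$. You propose to argue via the image of $\ell$ in cohomology together with an injection $A^2_{(0)}(X)\hookrightarrow H^\ast(X)$. First, $\ell\in A^2(X)$ maps to $H^4(X)$, not $H^2(X)$. More importantly, the injectivity $A^2_{(0)}(X)\hookrightarrow H^4(X)$ is exactly Murre's conjecture D for this piece, and it is \emph{not} known for Fano varieties of arbitrary smooth cubic fourfolds; the paper flags this explicitly as open in the remark following the definition of MCK decompositions. Nor is $\ell$ simply a power of the Pl\"ucker class $g\in A^1(X)$, so ``$\iota$ preserves the polarization'' does not settle the matter either. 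The paper avoids this entirely by invoking the explicit identity $\ell=\tfrac{5}{6}\,c_2(T_X)$ in $A^2(X)$ \cite[Equation (108)]{SV}: since $\iota^\ast T_X\cong T_X$, one has $\iota^\ast c_2(T_X)=c_2(T_X)$ and hence $\iota^\ast(\ell)=\ell$ directly in the Chow group, with no appeal to cycle--class injectivity. (An alternative, also given in the paper, is to check $(\iota\times\iota)^\ast L=L$ from the explicit formula for $L$ in terms of $g$, $c=c_2(\EE_2)$ and the incidence correspondence $I$, all of which are visibly $\iota$--equivariant.) Once $\iota^\ast(\ell)=\ell$ is secured in this way, your deductions for $A^4_{(2)}$, $A^4_{(4)}$ and $A^4_{(0)}=\QQ\cdot\ell^2$ go through exactly as written.
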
     
   
   \begin{proof} First, we note that
    \[ A^2_{(2)}(X) = I_\ast A^4_{hom}(X)\ ,\]
    where $I\subset X\times X$ is the incidence correspondence \cite[Proof of Proposition 21.10]{SV}. On the other hand, 
    \[ I={}^t P\circ P\ \ \ \hbox{in}\ A^2(X\times X)\ ,\]
    where $X\leftarrow P\to Y$ denotes the universal family of lines on $Y$ \cite[Lemma 17.2]{SV}. Hence,
    \[ A^2_{(2)}(X) = ({}^t P)_\ast  P_\ast A^4_{hom}(X)\ .\]
    But $P_\ast\colon A^4_{hom}(X)\to A^3_{hom}(Y)$ is surjective \cite{Par}, and so
    \[  A^2_{(2)}(X) = ({}^t P)_\ast A^3_{hom}(Y)\ .\]
    
    It is readily checked that the diagram
    \[ \begin{array}[c]{ccc}
        A^3_{hom}(Y) & \xrightarrow{({}^t P)_\ast}& A^2(X)\\
      {\scriptstyle (\iota_Y)^\ast}\  \downarrow\ \ \ \ \  &&\ \ \ \ \downarrow \ {\scriptstyle \iota^\ast} \\     
      A^3_{hom}(Y) & \xrightarrow{({}^t P)_\ast}& A^2(X)\\
      \end{array}\]
      is commutative (this is because the involution extends to an involution on $P$).
      Using this diagram, theorem \ref{main2} implies that $\iota$ acts as minus the identity on $({}^t P)_\ast A^3_{hom}(Y)= A^2_{(2)}(X)$.
      
      Since intersection product induces a surjection
      \[ A^2_{(2)}(X)\otimes A^2_{(2)}(X) \to\    A^4_{(4)}(X) \]
      (theorem \ref{chowringfano}(\rom2)), it follows that $\iota$ acts as the identity on $A^4_{(4)}(X)$.
      
      Next, we want to exploit the fact that there is an isomorphism
       \[ \cdot \ell\colon\ \ \ A^2_{(2)}(X)\ \xrightarrow{\cong}\ A^4_{(2)}(X) \]
       (theorem \ref{chowringfano}(\rom1)). Since $\iota^\ast(\ell)=\ell$ (proposition \ref{ell} below), this implies that $\iota$ acts as minus the identity on $A^4_{(2)}(X)$.
       
      \begin{proposition}\label{ell} Let $X$ be the variety of lines on a smooth cubic fourfold $Y\subset\PP^5(\C)$, and let $\iota\in\aut(X)$ be an involution induced by an involution $\iota_Y\in\aut(Y)$. Let $\ell\in A^2(X)$ be the class of theorem \ref{chowringfano}(\rom1).
      Then
        \[ \iota^\ast(\ell)=\ell\ \ \ \hbox{in}\ A^2(X)\ .\]
       \end{proposition} 
      
      \begin{proof} We give two proofs of this fact. The first proof has the benefit of brevity; the second proof will be useful in proving another result (lemma \ref{compat} below).      
      
 \noindent
 {\it First proof:\/} It is known that
   \[ \ell= {5\over 6} c_2(X)\ \ \ \hbox{in}\ A^2(X) \]
   (where the right--hand side denotes the second Chern class of the tangent bundle $T_X$ of $X$) \cite[Equation (108)]{SV}. Since
   \[ \iota^\ast c_2(X) = c_2(\iota^\ast T_X)=c_2(X)\ \ \ \hbox{in}\ A^2(X)\ ,\]
   this proves the proposition.
 
 \noindent
 {\it Second proof:\/} Shen--Vial define the class $L\in A^2(X\times X)$ (lifting the Beauville-Bogomolov class $\BB\in H^4(X\times X)$) as
     \[ L:=   {1\over 3}\bigl(  (g_1)^2  +{3\over 2} g_1 g_2 +(g_2)^2 -c_1 -c_2\bigr) - I         \ \ \ \in A^2(X\times X) \ \]
   \cite[Equation (107)]{SV}. Here $I$ is the incidence correspondence, and
     \[ \begin{split} g&:=-c_1(\EE_2)\ \ \ \in\ A^1(X)\ ,\\
                        c&:=c_2(\EE_2)\ \ \ \in\ A^2(X)\ ,\\
                        g_i&:= (p_i)^\ast(g)  \ \ \ \in\ A^1(X\times X)\ \ \ (i=1,2)\ ,\\
                        c_i&:= (p_i)^\ast(c)  \ \ \ \in\ A^2(X\times X)\ \ \ (i=1,2)\ ,\\ 
                       \end{split}\] 
    where $\EE_2$ is the rank $2$ vector bundle coming from the tautological bundle on the Grassmannian, and $p_i\colon X\times X\to X$ denote the two projections. 
   
   Clearly one has 
   \[ (\iota\times\iota)^\ast (I) =I\ ,\ \ \ (\iota\times\iota)^\ast(c_i)=c_i\ ,\ \ \  (\iota\times\iota)^\ast(g_i)=g_i  \ .\]
   In view of the definition of $L$, it follows that
   \[  (\iota\times\iota)^\ast (L) =L\ \ \ \hbox{in}\ A^2(X\times X)\ .\]
    Using Lieberman's lemma \cite[Lemma 3.3]{V3}, plus the fact that ${}^t \Gamma_\iota= \Gamma_\iota$, this means there is a commutativity relation
     \begin{equation}\label{commut}  L\circ \Gamma_\iota= \Gamma_\iota\circ L\ \ \ \hbox{in}\ A^2(X\times X)\ .\end{equation}
     
     The class $\ell$ is defined as $\ell:=(i_\Delta)^\ast(L)\in A^2(X)$. We now find that
     \[ \begin{split}  \iota^\ast (\ell)&=\iota^\ast (i_\Delta)^\ast(L)    \\
                                    & = (i_\Delta)^\ast (\iota\times \iota)^\ast (L) \\
                                    & =  (i_\Delta)^\ast ( \Gamma_\iota\circ L\circ \Gamma_\iota) \\
                                    & =  (i_\Delta)^\ast (L) =\ell\ \ \ \ \hbox{in}\ A^2(X)\ .\\
                                 \end{split} \]
                  Here the second equality is by virtue of the commutative diagram
            \[  \begin{array}[c]{ccc}
            X & \xrightarrow{i_\Delta} & X\times X\\
            {\scriptstyle \iota} \ \downarrow\ \ \ \ & & \ \ \ \ \downarrow\ {\scriptstyle \iota\times\iota}\\
               X & \xrightarrow{i_\Delta} & X\times X\\
             \end{array}\]  
          The third equality is again Lieberman's lemma, plus the fact that ${}^t \Gamma_\iota=\Gamma_\iota$. The last equality is (\ref{commut}).  
     \end{proof}
     
     It only remains to prove theorem \ref{main} is true for $(i,j)=(4,0)$. This follows from the fact that $A^4_{(0)}(X)$ is generated by $\ell^2$ \cite{SV}, plus the fact that $\ell$ is $\iota$--invariant (proposition \ref{ell}). Theorem \ref{main} is now proven.
   \end{proof}
   
 For later use, we remark that the argument of proposition \ref{ell} also proves the following compatibility statement:  
    
  \begin{lemma}\label{compat} Let $X$ be the variety of lines on a smooth cubic fourfold $Y\subset\PP^5(\C)$, and let $\iota\in\aut(X)$ be an involution induced by an involution $\iota_Y\in\aut(Y)$. Then
    \[ \iota^\ast A^i_{(j)}(X)\ \subset\ A^i_{(j)}(X)\ \ \ \forall i,j\ .\]
     \end{lemma}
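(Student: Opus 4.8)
The plan is to reduce the statement to the $(\iota\times\iota)$--invariance of the Shen--Vial correspondence $L\in A^2(X\times X)$ --- which was already established in the second proof of proposition \ref{ell} --- together with the explicit way in which the Fourier decomposition of \cite{SV} is built out of $L$. First I would recall (theorem \ref{fanomck}) that $A^i_{(j)}(X)=(\pi^X_{2i-j})_\ast A^i(X)$ for the Shen--Vial CK decomposition $\{\pi^X_k\}$, so that it suffices to prove $\Gamma_\iota\circ\pi^X_k=\pi^X_k\circ\Gamma_\iota$ in $A^4(X\times X)$ for all $k$; indeed, granting this, any $a\in A^i_{(j)}(X)$ equals $(\pi^X_{2i-j})_\ast a$, whence $\iota^\ast a=(\Gamma_\iota\circ\pi^X_{2i-j})_\ast a=(\pi^X_{2i-j}\circ\Gamma_\iota)_\ast a=(\pi^X_{2i-j})_\ast\iota^\ast a$ again lies in $A^i_{(j)}(X)$. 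By Lieberman's lemma \cite[Lemma 3.3]{V3} and the relation ${}^t\Gamma_\iota=\Gamma_\iota$ (exactly as in the second proof of proposition \ref{ell}), this commutation is in turn equivalent to the single assertion $(\iota\times\iota)^\ast\pi^X_k=\pi^X_k$ for all $k$.

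To establish the latter, I would unwind the Shen--Vial construction: each $\pi^X_k$ is obtained, by composition, transposition and intersection product, from the correspondence $L$ (equivalently the incidence correspondence $I$), the pulled--back classes $g_i=(p_i)^\ast g$ and $c_i=(p_i)^\ast c$ with $g=-c_1(\EE_2)$, $c=c_2(\EE_2)$, the canonical $0$--cycle $\oo_X\in A^4(X)$ (a rational multiple of $\ell^2$), and the diagonal $\Delta_X$. Each of these ingredients is $(\iota\times\iota)$--invariant: for $L$, $g_i$ and $c_i$ this is precisely the content of the second proof of proposition \ref{ell}; for $\oo_X$ it follows from $\iota^\ast\ell=\ell$ (proposition \ref{ell}); and $\Delta_X$ is fixed by $\iota\times\iota$. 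Since $(\iota\times\iota)^\ast$ commutes with composition, transposition and intersection product of self--correspondences, it fixes every $\pi^X_k$, as desired. Equivalently, and perhaps more transparently, since $(\iota\times\iota)^\ast L=L$ the Fourier transform $\mathcal F_X=(p_2)_\ast\bigl(e^{L}\cdot(p_1)^\ast(-)\bigr)$ commutes with $\iota^\ast$ (by the projection formula together with $p_i\circ(\iota\times\iota)=\iota\circ p_i$), and as the bigrading $A^\ast_{(\ast)}(X)$ of \cite{SV} is defined purely in terms of $\mathcal F_X$ and the grading by codimension, while $\iota^\ast$ preserves codimension, $\iota^\ast$ must preserve each $A^i_{(j)}(X)$.

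The only delicate point is bookkeeping: one must check that every correspondence entering the Shen--Vial formulas for the $\pi^X_k$, or equivalently every cycle entering the kernel $e^{L}$ of the Fourier transform, lies in the subalgebra of $A^\ast(X\times X)$ generated by $(\iota\times\iota)$--invariant classes. The only non--formal input here is the invariance of $L$ itself, and that is exactly the computation already carried out in proposition \ref{ell}; everything else is either manifestly invariant or reduces to $\iota^\ast\ell=\ell$. I therefore expect no substantial obstacle --- which is precisely why the lemma falls out of the argument of proposition \ref{ell}.
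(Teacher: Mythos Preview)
Your strategy is sound and, if carried through, would actually prove the slightly stronger statement that the Shen--Vial projectors themselves satisfy $(\iota\times\iota)^\ast\pi^X_k=\pi^X_k$ in $A^4(X\times X)$. The paper, however, takes a more economical route that avoids unwinding the explicit construction of the $\pi^X_k$. Instead of working with the projectors, the paper works directly with the eigenspaces
\[
\Lambda^i_\lambda:=\bigl\{a\in A^i(X)\ \vert\ (L^2)_\ast a=\lambda a\bigr\}\ .
\]
From $(\iota\times\iota)^\ast L=L$ one deduces $\Gamma_\iota\circ L^2=L^2\circ\Gamma_\iota$, hence $\iota^\ast\Lambda^i_\lambda\subset\Lambda^i_\lambda$ for all $i,\lambda$. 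By \cite[Theorem 2.1]{SV}, every $A^i_{(j)}(X)$ is a sum of such eigenspaces \emph{except} that $\Lambda^4_0=A^4_{(0)}(X)\oplus A^4_{(2)}(X)$; this residual ambiguity is then resolved by observing $A^4_{(2)}(X)=\Lambda^4_0\cap A^4_{hom}(X)$ (preserved because $\iota^\ast$ respects homological triviality) and $A^4_{(0)}(X)=\QQ\cdot\ell^2$ (preserved by proposition \ref{ell}).

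Two remarks on your variants. First, your ``Fourier transform'' shortcut asserts that the bigrading is recoverable purely from $\mathcal F_X$ and codimension; but the paper's argument shows precisely that the $(L^2)_\ast$--eigenspace description fails to separate $A^4_{(0)}$ from $A^4_{(2)}$ without the extra datum of $A^4_{hom}$, so this claim is not as automatic as you suggest and would itself need justification. Second, your projector approach does sidestep this issue, but then the ``bookkeeping'' you mention --- checking that the explicit formulas for the $\pi^X_k$ in \cite{SV} are genuinely polynomials in $L$, $g_i$, $c_i$, $\oo_X$, $\Delta_X$ --- is real work that you have not carried out. The paper's eigenspace argument trades that verification for a single citation of \cite[Theorem 2.1]{SV} plus the two--line treatment of the exceptional pair $(4,0),(4,2)$.
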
  
     
   \begin{proof} Given the Shen--Vial class $L\in A^2(X\times X)$ as above, consider (as in \cite[Section 2]{SV}) the eigenspaces
    \[ \Lambda^i_\lambda := \bigl\{ a\in A^i(X)\ \vert\ (L^2)_\ast (a)=\lambda a\bigr\}\ .\]
   We observe that equality (\ref{commut}) implies
    \[ (\iota\times\iota)^\ast (L^2) = \bigl((\iota\times\iota)^\ast (L)\bigr)^2 = L^2\ \ \ \hbox{in}\ A^4(X\times X)\ .\] 
   Using Lieberman's lemma, this is equivalent to the commutativity relation
    \[  \Gamma_\iota\circ L^2 = L^2\circ \Gamma_\iota\ \ \ \hbox{in}\ A^4(X\times X)\ .\]
    This commutativity relation implies that the involution respects the eigenspaces:
    \begin{equation}\label{eigen} \iota^\ast \Lambda^i_\lambda \ \subset\  \Lambda^i_\lambda\ \ \ \forall i,\lambda\ .\end{equation}

   As shown in \cite[Part 3]{SV}, $X$ verifies the hypotheses of \cite[Theorem 2.1]{SV}. As such, \cite[Theorem 2.1]{SV} applies to $X$,
   which means that each $A^i_{(j)}(X)$ is a sum of eigenspaces $\Lambda^i_\lambda$ except for $(i,j)=(4,0)$ and $(i,j)=(4,2)$, for which one has
     \[ \begin{split}  \Lambda^4_0 &= A^4_{(0)}(X)\oplus A^4_{(2)}(X)\ ,\\
                             \Lambda^4_0\cap A^4_{hom}(X) &=A^4_{(2)}(X)\ .\\
                            \end{split} \]
           Property (\ref{eigen}), plus the fact that $\iota^\ast A^\ast_{hom}(X)\subset A^\ast_{hom}(X)$, thus implies that
                   \[ \iota^\ast A^i_{(j)}(X)\ \subset\ A^i_{(j)}(X)\ \ \ \forall (i,j)\not=(4,0)\ .\]
           The remaining case $(i,j)=(4,0)$ was already treated above: $A^4_{(0)}(X)$ is generated by $\ell^2$ and $\iota^\ast(\ell)=\ell$.   
           \end{proof}

 \section{Corollaries}
 
 In this last section, we consider the quotient 
   $ Z:=X/\iota$,
   for $(X,\iota)$ as in theorem \ref{main}. The variety $Z$ is a slightly singular Calabi--Yau variety. As is well--known, Chow groups with $\QQ$--coefficients of quotient varieties such as $Z$ still have a ring structure \cite[Examples 8.3.12 and 17.4.10]{F}. For this reason, we will write $A^i(Z)$ for the Chow group of codimension $i$ cycles on $Z$ (just as in the smooth case).
 
 \begin{corollary}\label{cor}  Let $(X,\iota)$ be as in theorem \ref{main}. Let $Z:=X/\iota$ be the quotient. Then 
 the image of the intersection product map
   \[ A^2(Z)\otimes A^2(Z)\ \to\ A^4(Z) \]
   has dimension $1$.
  \end{corollary}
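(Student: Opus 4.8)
The plan is to transfer the computation from $Z$ to the $\iota$-invariant part of the Chow ring of $X$, where the results of Section 3 take effect. Since $Z$ has only quotient singularities, the quotient morphism $q\colon X\to Z$ induces (working with $\QQ$-coefficients) isomorphisms $q^\ast\colon A^i(Z)\xrightarrow{\cong} A^i(X)^\iota$ onto the $\iota$-invariant subspace, and $q^\ast$ is compatible with intersection products (cf. \cite[Examples 8.3.12 and 17.4.10]{F}). Therefore it suffices to show that the image of the intersection product map $A^2(X)^\iota\otimes A^2(X)^\iota\to A^4(X)$ has dimension $1$.

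First I would decompose $A^2(X)$ using the Fourier decomposition of Shen--Vial: by Theorem \ref{fanomck}, $A^2(X)=A^2_{(0)}(X)\oplus A^2_{(2)}(X)$. By Lemma \ref{compat}, $\iota^\ast$ preserves each piece, so $A^2(X)^\iota = A^2_{(0)}(X)^\iota\oplus A^2_{(2)}(X)^\iota$. Now Theorem \ref{main} gives $\iota^\ast=-\ide$ on $A^2_{(2)}(X)$, so $A^2_{(2)}(X)^\iota=0$. Hence $A^2(X)^\iota = A^2_{(0)}(X)^\iota\subseteq A^2_{(0)}(X)$. The space $A^2_{(0)}(X)$ is known to be spanned by $\ell$ together with classes coming from $A^2_{(0)}$-cycles pulled back from the cubic; in any case, what matters is the product: by the bigraded-ring behaviour of the pieces that \emph{has} been established in the relevant degrees, intersection sends $A^2_{(0)}(X)\otimes A^2_{(0)}(X)$ into $A^4_{(0)}(X)\oplus A^4_{(2)}(X)$ (the possibility of a nonzero $A^4_{(2)}$-component being exactly the subtlety flagged in Remark \ref{pity}). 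So the image of $A^2(X)^\iota\otimes A^2(X)^\iota$ under intersection product lands in $(A^4_{(0)}(X)\oplus A^4_{(2)}(X))^\iota$.

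Next I would kill the $A^4_{(2)}$-part. By Theorem \ref{main}, $\iota^\ast=-\ide$ on $A^4_{(2)}(X)$, hence $A^4_{(2)}(X)^\iota=0$; and by Lemma \ref{compat} the $\iota$-action respects the decomposition $A^4_{(0)}\oplus A^4_{(2)}$, so $(A^4_{(0)}(X)\oplus A^4_{(2)}(X))^\iota = A^4_{(0)}(X)^\iota$. Finally $A^4_{(0)}(X)$ is one-dimensional, generated by $\ell^2$ \cite{SV}, and $\ell^2$ is $\iota$-invariant by Proposition \ref{ell}, so $A^4_{(0)}(X)^\iota=A^4_{(0)}(X)$ is exactly one-dimensional. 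Thus the image of $A^2(X)^\iota\otimes A^2(X)^\iota\to A^4(X)$ is contained in a one-dimensional space; it remains to check it is not zero, which follows because $\ell\in A^2_{(0)}(X)$ is $\iota$-invariant (Proposition \ref{ell}) and $\ell^2\neq 0$. Transporting back via $q^\ast$ gives the claim for $Z$.

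The one genuinely delicate point is the assertion that $A^2_{(0)}(X)\cdot A^2_{(0)}(X)\subset A^4_{(0)}(X)\oplus A^4_{(2)}(X)$ rather than possibly spilling into $A^4_{(4)}(X)$ — this is where one must invoke the partial multiplicativity results of \cite{SV} (the product of $A^2_{(0)}$-classes is homologically governed and cannot hit the $(4)$-piece, which consists of Abel--Jacobi-trivial classes of the second kind). Everything else is bookkeeping with the eigenspace decomposition and the explicit facts $\iota^\ast\ell=\ell$, $A^4_{(0)}(X)=\QQ\ell^2$. So the main obstacle is not an obstacle to \emph{this} corollary but rather ensuring the containment statement is correctly justified from \cite{SV}; once that is in hand, the invariance computations of Section 3 do all the work.
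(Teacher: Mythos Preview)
Your argument is correct and follows essentially the same route as the paper: the paper packages your two invariance computations as separate lemmas (Lemma~\ref{split} for $A^2(X)^\iota\subset A^2_{(0)}(X)$ and Lemma~\ref{split2} for $(A^4_{(0)}\oplus A^4_{(2)})\cap A^4(X)^\iota=A^4_{(0)}$), but the logic is identical. The one place you should tighten is the ``delicate point'': the precise input you need from \cite{SV} is \cite[Proposition~22.8]{SV}, which gives $A^2_{(0)}(X)\cdot A^2_{(0)}(X)\subset A^4_{(0)}(X)\oplus A^4_{(2)}(X)$ directly---your heuristic about Abel--Jacobi triviality is not the actual mechanism, so just cite that proposition.
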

 
 \begin{proof} 
 We first establish a lemma:
 
 \begin{lemma}\label{split} Let $(X,\iota)$ be as in theorem \ref{main}. 
 Then
     \[ A^2(X)^\iota\ \subset\ A^2_{(0)}(X)\ .\]
   \end{lemma}
   
 \begin{proof} Let $c\in A^2(X)^\iota$, and suppose 
   \[ c=c_0 + c_2\ \ \ \hbox{in}\ A^2_{(0)}(X)\oplus A^2_{(2)}(X)\ ,\]
   where $c_j\in A^2_{(j)}(X)$. Since $c$ is $\iota$--invariant, we also have
   \[ c=\iota^\ast(c)= \iota^\ast(c_0) + \iota^\ast(c_2) =  \iota^\ast(c_0) - c_2\ \ \ A^2(X)\ \]
   (where we have used theorem \ref{main} to conclude that $\iota^\ast(c_2)=-c_2$.
   But $\iota^\ast(c_0)\in A^2_{(0)}(X)$ (lemma \ref{compat}), and so (by unicity of the decomposition $c=c_0+c_2$) we must have
    \[ \iota^\ast(c_0)=c_0\ ,\ \ \ -c_2=c_2\ .\] 
   \end{proof}

 Let $p\colon X\to Z$ be the quotient morphism. Lemma \ref{split} says that
   \[ p^\ast A^2(Z)\ \subset\ A^2_{(0)}(X)\ .\]
   It follows that
   \[  \begin{split}  p^\ast   \ima \bigl( A^2(Z)\otimes A^2(Z)\to A^4(Z)\bigr) \ &\subset\ \ima \bigl(  p^\ast A^2(Z)\otimes p^\ast A^2(Z)\to A^4(X)\bigr)
     \\
     \ &\subset\ 
         \ima \bigl(   A^2_{(0)}(X)\otimes  A^2_{(0)}(X)\to A^4(X)\bigr)\\
        &\subset\ A^4_{(0)}(X)\oplus A^4_{(2)}(X) 
         \ .\\
         \end{split}\]
         Here for the last inclusion we have used \cite[Proposition 22.8]{SV}.
         
        On the other hand, we have
        \[  p^\ast   \ima \bigl( A^2(Z)\otimes A^2(Z)\to A^4(Z)\bigr)\ \subset\ A^4(X)^\iota\ ,\]
        and so (by combining with the above inclusion) we find that
         \[  p^\ast   \ima \bigl( A^2(Z)\otimes A^2(Z)\to A^4(Z)\bigr)\ \subset\  \bigl(A^4_{(0)}(X)\oplus A^4_{(2)}(X) \bigr) \cap 
              A^4(X)^\iota\ .\]     
          Applying lemma \ref{split2} below, it follows that
          \[        p^\ast   \ima \bigl( A^2(Z)\otimes A^2(Z)\to A^4(Z)\bigr)\ \subset\ A^4_{(0)}(X)\ .\]
         Since $\dim A^4_{(0)}(X)=1$ and $p^\ast$ is injective, this proves 
         \[          \dim   \ima \bigl( A^2(Z)\otimes A^2(Z)\to A^4(Z)\bigr)   \ \le 1\ .\]
         Since $ p^\ast   \ima \bigl( A^2(Z)\otimes A^2(Z)\to A^4(Z)\bigr)$ contains the non--trivial element $\ell^2$, the dimension must be equal to $1$.  
         
     \begin{lemma}\label{split2} Let $(X,\iota)$ be as in theorem \ref{main}. Then
      \[   \bigl(A^4_{(0)}(X)\oplus A^4_{(2)}(X) \bigr) \cap 
              A^4(X)^\iota = A^4_{(0)}(X)  \ .\]     
       \end{lemma}
       
       \begin{proof} This is similar to the proof of lemma \ref{split}. Let
        \[ b= b_0+ b_2\ \ \ \in A^4_{(0)}(X)\oplus A^4_{(2)}(X) \ \]
        (where $b_j\in A^4_{(j)}(X)$), and suppose $b$ is $\iota$--invariant. In view of theorem \ref{main}, this means that
        \[ b=\iota^\ast(b) = \iota^\ast(b_0) + \iota^\ast(b_2) = \iota^\ast(b_0)- b_2\ \ \ \hbox{in}\ A^4(X)\ .\]   
        But $\iota^\ast(b_0)\in A^4_{(0)}(X)$ (lemma \ref{compat}), and so (because of unicity of the decomposition $b=b_0+b_2$) we must have
        \[    \iota^\ast(b_0)=b_0\ ,\ \ \ -b_2=b_2\ .\]   
        This proves the inclusion ``$\subset$''. The opposite inclusion holds since $\ell^2$ generates $A^4_{(0)}(X)$ and is $\iota$--invariant.    
       \end{proof}
                                
 \end{proof}

 One can also say something about $1$--cycles on the quotient:
 
 \begin{corollary}\label{cor2} Let $(X,\iota)$ be as in theorem \ref{main}. Let $Z:=X/\iota$ be the quotient.
 Then the image of the intersection product map
    \[ \ima \bigl(  A^2(Z)\otimes A^1(Z)\xrightarrow{} A^3(Z) \bigr) \]
    is a finite--dimensional $\QQ$--vector space.
   \end{corollary}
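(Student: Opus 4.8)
The plan is to run the argument of corollary \ref{cor} with the pair (divisor, $2$--cycle) in place of (two $2$--cycles). Let $p\colon X\to Z$ be the quotient morphism. Working with $\QQ$--coefficients, $p$ has degree $2$, so $p^\ast$ is injective and identifies $A^i(Z)$ with the invariant part $A^i(X)^\iota$; moreover $p^\ast$ respects intersection products. Consequently
\[ p^\ast\,\ima\bigl(A^2(Z)\otimes A^1(Z)\to A^3(Z)\bigr) \ =\ \ima\bigl(A^2(X)^\iota\otimes A^1(X)^\iota\to A^3(X)\bigr)\ ,\]
and since $p^\ast$ is injective it suffices to prove that the right--hand side is a finite--dimensional $\QQ$--vector space.

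For this I would bound the two factors separately. First, $A^1(X)^\iota\subset A^1(X)=NS(X)_\QQ$, which is finite--dimensional because $X$ is hyperk\"ahler (so $H^1(\OO_X)=0$). Second, lemma \ref{split} gives $A^2(X)^\iota\subset A^2_{(0)}(X)$, and the piece $A^2_{(0)}(X)$ is finite--dimensional: this is a structural result of Shen--Vial \cite{SV} (the piece $A^2_{(0)}(X)$ injects into $H^4(X,\QQ)$, exactly as $A^4_{(0)}(X)$ injects into $H^8(X,\QQ)=\QQ$ in corollary \ref{cor}). Hence $A^2(X)^\iota\otimes A^1(X)^\iota$ is a finite--dimensional vector space, and therefore so is its image under the intersection product map into $A^3(X)$. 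This proves the corollary.

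I do not expect a genuine obstacle here. The point is that, in contrast to the full multiplicativity question of remark \ref{pity}, the product under consideration never pairs two ``transcendental'' pieces against each other: one factor is a divisor class, and the other is pushed into the small, finite--dimensional piece $A^2_{(0)}(X)$ by lemma \ref{split}. (If one wished to avoid invoking finite--dimensionality of $A^2_{(0)}(X)$ as an input, one could instead argue that multiplication by a divisor carries $A^2_{(0)}(X)$ into $A^3_{(0)}(X)$, which is finite--dimensional by Murre's conjecture D for $X=F(Y)$; but since $A^1(X)$ is already finite--dimensional this refinement is not needed.)
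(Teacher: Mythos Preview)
Your argument has a genuine gap at the step where you assert that $A^2_{(0)}(X)$ is finite--dimensional because it ``injects into $H^4(X,\QQ)$''. This injectivity is precisely Murre's conjecture D for the piece $A^2_{(0)}$, and it is \emph{not} known for the Fano variety of an arbitrary smooth cubic fourfold; in particular it is not established in \cite{SV} (cf.\ the remark following theorem \ref{fanomck}, where the expected vanishing $A^i_{(0)}(X)\cap A^i_{hom}(X)\stackrel{??}{=}0$ is flagged as unproven). The analogy you draw with $A^4_{(0)}(X)$ is misleading: the one--dimensionality of $A^4_{(0)}(X)$ is a special fact coming from the explicit description $A^4_{(0)}(X)=\QQ\cdot\ell^2$, not an instance of a general injectivity principle.

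The paper's proof works around exactly this difficulty. One still has $p^\ast A^2(Z)\subset A^2_{(0)}(X)$ from lemma \ref{split}, but rather than claiming $A^2_{(0)}(X)$ is finite--dimensional, one uses the (known) exact sequence
\[ 0\to A^2_{(0),hom}(X)\to A^2_{(0)}(X)\to N^2 H^4(X)\to 0 \]
together with the specific Shen--Vial input \cite[Proposition 22.4]{SV} that the intersection map
\[ A^2_{(0),hom}(X)\otimes A^1(X)\ \to\ A^3(X) \]
is the zero map. Choosing lifts $b_1,\ldots,b_r\in A^2_{(0)}(X)$ of a basis of the finite--dimensional space $N^2 H^4(X)$, one then gets
\[ \ima\bigl(A^2_{(0)}(X)\otimes A^1(X)\to A^3(X)\bigr)=\ima\bigl(\langle b_1,\ldots,b_r\rangle\otimes A^1(X)\to A^3(X)\bigr)\ ,\]
which is finite--dimensional since $A^1(X)$ is. Your parenthetical alternative (push into $A^3_{(0)}(X)$ via multiplicativity) also does not work as stated: the inclusion $A^2_{(0)}(X)\cdot A^1(X)\subset A^3_{(0)}(X)$ is only known for \emph{very general} cubic fourfolds, not for the specific cubics of theorem \ref{main} (this is exactly the obstruction discussed in the remark following corollary \ref{cor2}).
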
 
   
  \begin{proof} As above, let $p\colon X\to Z$ denote the quotient morphism. We have seen (lemma \ref{split}) that
     \[ p^\ast A^2(Z)\ \subset\ A^2_{(0)}(X)\ ,\]
     and so
     \[ p^\ast \ima \bigl(  A^2(Z)\otimes A^1(Z)\xrightarrow{} A^3(Z)\bigr) \ \subset\  \ima \bigl( A^2_{(0)}(X)\otimes A^1(X)\to A^3(X) \bigr)\ .  \]     
     
     Let $N^2 H^4(X)\subset H^4(X)$ denote the $\QQ$--vector space of cycle classes. There is an exact sequence
     \[  0\to A^2_{(0),hom}(X)  \to A^2_{(0)}(X) \to N^2 H^4(X)\to 0\ .\]
   Let  $b_1,\ldots, b_r$ be (non--canonical) lifts of a basis of the finite--dimensional $\QQ$--vector space  $N^2 H^4(X)$ to $A^2_{(0)}(X)$, and 
   let $B\subset A^2_{(0)}(X)$ be the finite--dimensional sub--vector space spanned by the $b_i$.
    
   It is known that the intersection product map
    \[  A^2_{(0),hom}(X)\otimes A^1(X)\ \to\  A^3(X) \]
    is the zero--map \cite[Proposition 22.4]{SV}. It follows that
    \[  \ima \bigl( A^2_{(0)}(X)\otimes A^1(X)\to A^3(X)\bigr) = \ima \bigl( B\otimes A^1(X)\to A^3(X)\bigr)\ ,\]
    which is finite--dimensional.
    
    Since $p^\ast$ is injective, this proves the corollary.
     \end{proof}

\begin{remark} Let $X$ and $Z$ be as in corollary \ref{cor2}. The statement obtained in corollary \ref{cor2} is less than optimal, in the following sense. It should be the case that the cycle class map induces an injection
  \begin{equation}\label{inj} \ima \bigl(  A^2(Z)\otimes A^1(Z)\xrightarrow{} A^3(Z) \bigr)\ \hookrightarrow\ H^6(Z)\ . \end{equation}
  Indeed, as we have seen $p^\ast A^2(Z)\subset A^2_{(0)}(X)$. Now {\em if\/} we knew that
  \begin{equation}\label{if} \ima \bigl( A^2_{(0)}(X)\otimes A^1(X)\to A^3(X)\bigr) \ \subset\  A^3_{(0)}(X)\ ,\end{equation}
  then one could conclude that (\ref{inj}) is true.
  
  Unfortunately, the inclusion (\ref{if}) is known only for Fano varieties of {\em very general\/} cubic fourfolds. The problem is thus (once again !) the absence of an MCK decomposition for Fano varieties of arbitrary smooth cubic fourfolds.
  \end{remark}

 \section{Closing remarks}

 \begin{remark} The family of hyperk\"ahler fourfolds as in theorem \ref{main} occurs as an example in \cite[3.6]{Beau4}. Applying \cite[Theorem 2]{Beau4}, one finds that the pair $(X,\iota)$ (where $X$ and $\iota$ are as in theorem \ref{main}) deforms in a $14$--dimensional family. The cubic fourfolds $Y$ as in theorem \ref{main} form a $14$--dimensional family, and so any deformation of $X$ that is not a Fano variety of lines ``loses'' the anti--symplectic involution.  
 \end{remark}
 
 \begin{remark} Let $(X,\iota)$ be as in theorem \ref{main}. As explained in \cite[3.6]{Beau4}, the fixed--locus of $\iota$ is the disjoint union 
 $S\cup T$, where $S$ is a cubic surface and $T$ is the Fano variety of lines on the smooth cubic threefold $g(X_1,\ldots,X_5)=0$.
 The surfaces $S$ and $T$ are Lagrangian subvarieties of $X$ \cite{Beau4}. 
 Lemma \ref{split} implies that $S$ and $T$ in $A^2_{(0)}(X)$. (NB: for $T$, this also follows from the fact that
   \[ T=c:=c_2(\EE_2)\ \ \ \hbox{in}\ A^2(X)\ ,\] 
   as it is known that $c\in A^2_{(0)}(X)$ \cite[Theorem 21.9(\rom3)]{SV}.)

This raises the following question:
 
 \noindent
 (\rom1) Is it true that intersecting with $S$ resp. $T$ induces the zero--map
   \[ A^2_{hom}(X)\ \to\ A^4(X)\ ?\]
   The answer is ``yes'' in both cases (for $T$ because $T=c$ in $A^2(X)$, and for $S$ because $S$ is rational, hence a constant cycle subvariety).
   
 A stronger version of question (\rom1) is
 
 \noindent
 (\rom1$^\prime$) Is it true that the restriction map 
   \[ A^2_{hom}(X)\to A^2(T) \] 
   (resp.  $A^2_{hom}(X)\to A^2(S)$) is the zero--map ?
 The answer is trivially ``yes'' for $S$, but ``I don't know'' for $T$.
 \end{remark}

 \begin{remark} Let $Y$ be a smooth cubic fourfold, and $X=F(Y)$ the Fano variety of lines on $X$. Let us say that an automorphism $\sigma\in\aut(X)$ is {\em induced\/} if it comes
 from an automorphism of $Y$. (An automorphism of $X$ is induced if and only if it is polarized, cf. \cite[Proposition 6.1]{LFu2}.)
 
 Apart from the family considered in theorem \ref{main}, there is only one other family of Fano varieties of lines on cubic fourfolds with an induced anti--symplectic involution. This is the third case of the table given in \cite[Section 7]{Cam}. It would be interesting to prove theorem \ref{main} for this family as well.
 
I do not know examples of non--induced involutions of Fano varieties of lines on cubic fourfolds. 

Also, it would be interesting to consider non--symplectic automorphisms of order $>2$ of Fano varieties of lines on cubic fourfolds. 
 \end{remark}

\vskip1cm
\begin{nonumberingt} 
Many thanks to Kai for reading ``Pluk van de Petteflat'' with me.
\end{nonumberingt}

\vskip1cm


\begin{thebibliography}{dlPG99}



\bibitem{Beau0} A. Beauville, Some remarks on K\"ahler manifolds with $c_1=0$, in: Classification of algebraic and analytic manifolds (Katata, 1982), Birkh\"auser Boston, Boston 1983,

\bibitem{Beau1} A. Beauville, Vari\'et\'es K\"ahleriennes dont la premi\`ere classe de Chern est nulle, J. Differential Geom. 18 no. 4 (1983), 755---782,

\bibitem{BD} A. Beauville and R. Donagi, La vari\'et\'e des droites d'une hypersurface cubique de dimension $4$, C. R. Acad. Sci. Paris S\'er. I Math. 301 no. 14 (1985), 703---706,

\bibitem{Beau3} A. Beauville, On the splitting of the Bloch--Beilinson filtration, in: Algebraic cycles and motives (J. Nagel and C. Peters, editors), London Math. Soc. Lecture Notes 344, Cambridge 
University Press 2007,

\bibitem{Beau4} A. Beauville, Antisymplectic involutions of holomorphic symplectic manifolds, Journal of Topology 4 no. 2 (2011), 300---304, 


\bibitem{Cam} C. Camere, Symplectic involutions of holomorphic symplectic
fourfolds, Bull. Lond. Math. Soc. 44 no. 4 (2012), 687---702,

\bibitem{Del} C. Delorme, Espaces projectifs anisotropes, Bull. Soc. Math. France 103 (1975), 203---223,

\bibitem{LFu} L. Fu, Decomposition of small diagonals and Chow rings of hypersurfaces and Calabi--Yau complete intersections, Advances in Mathematics (2013), 894---924,

\bibitem{LFu2} L. Fu, On the action of symplectic automorphisms on the $CH_0$--groups of some hyper-K\"ahler fourfolds, Math. Z. 280 (2015), 307---334,


\bibitem{FTV} L. Fu, Z. Tian and C. Vial, Motivic hyperk\"ahler resolution conjecture for generalized Kummer varieties, arXiv:1608.04968,

\bibitem{F} W. Fulton, Intersection theory, Springer--Verlag Ergebnisse der Mathematik, Berlin Heidelberg New York Tokyo 1984,

\bibitem{J2} U. Jannsen, Motivic sheaves and filtrations on Chow groups, in: Motives (U. Jannsen et alii, eds.), Proceedings of Symposia in Pure Mathematics Vol. 55 (1994), Part 1,  

\bibitem{EPW} R. Laterveer, Algebraic cycles on a very special EPW sextic, submitted,

\bibitem{BlochHK4} R. Laterveer, Bloch's conjecture for certain hyperk\"ahler fourfolds, and EPW sextics, submitted,

\bibitem{HKnonsymp} R. Laterveer, About Chow groups of certain hyperk\"ahler varieties with non--symplectic automorphisms, submitted,

\bibitem{ChowEPW} R. Laterveer, On the Chow groups of certain EPW sextics, submitted,


\bibitem{Mur} J. Murre, On a conjectural filtration on the Chow groups of an algebraic variety, parts I and II, Indag. Math. 4 (1993), 177---201,

\bibitem{MNP} J. Murre, J. Nagel and C. Peters, Lectures on the theory of pure motives, Amer. Math. Soc. University Lecture Series 61, Providence 2013,

\bibitem{Par} K. Paranjape, Cohomological and cycle--theoretic connectivity, Ann. of Math. 139 no. 3 (1994), 641---660,


\bibitem{Sc} T. Scholl, Classical motives, in: Motives (U. Jannsen et alii, eds.), Proceedings of Symposia in Pure Mathematics Vol. 55 (1994), 
Part 1, 

\bibitem{SV} M. Shen and C. Vial, The Fourier transform for certain hyperK\"ahler fourfolds, Memoirs of the AMS 240 (2016), no.1139,

\bibitem{SV2} M. Shen and C. Vial, The motive of the Hilbert cube $X^{[3]}$, arXiv:1503.00876, 


\bibitem{V3} C. Vial, Remarks on motives of abelian type, to appear in Tohoku Math. J.,



\bibitem{V6} C. Vial, On the motive of some hyperk\"ahler varieties, to appear in J. f\"ur Reine u. Angew. Math.,

\bibitem{V13} C. Voisin, Chow rings and decomposition theorems for $K3$ surfaces and Calabi--Yau hypersurfaces, Geom. Topol.
16 (2012), 433---473,


\bibitem{V0} C. Voisin, The generalized Hodge and Bloch conjectures are equivalent for general complete intersections, Ann. Sci. Ecole Norm. Sup. 46, fascicule 3 (2013), 449---475,

\bibitem{V1} C. Voisin, The generalized Hodge and Bloch conjectures are equivalent for general complete intersections, II, J. Math. Sci. Univ. Tokyo  22 (2015), 491---517,

\bibitem{V8} C. Voisin, Bloch's conjecture for Catanese and Barlow surfaces, J. Differential Geometry 97 (2014), 149---175,

\bibitem{Vo} C. Voisin, Chow Rings, Decomposition of the Diagonal, and the Topology of Families, Princeton University Press, Princeton and Oxford, 2014,


\end{thebibliography}
\end{document}